\newtheorem{theorem}{Theorem}[section]
\newtheorem*{maintheorem}{Main Theorem}
\newtheorem*{theorem*}{Theorem}
\newtheorem*{corollary*}{Corollary}
\newtheorem*{sub-claim}{sub-claim}
\theoremstyle{definition}
\newtheorem{example}[theorem]{Example}
\newtheorem{definition}[theorem]{Definition}
\theoremstyle{remark}
\newtheorem*{definition*}{Definition}
\newcommand{\R}{\mathbb{R}}
\newcommand{\N}{\mathbb{N}}
\newcommand{\C}{\mathcal{C}}
\newcommand{\explicitSet}[1]{\left\lbrace #1 \right\rbrace}
\newcommand{\brackets}[1]{\left\langle #1 \right\rangle}
\newcommand{\set}[2]{\explicitSet{#1 \colon #2}}
\newcommand{\seq}[2]{\brackets{#1 \colon #2}}
\newcommand{\g}{\gamma}
\newcommand{\dlt}{\delta}
\newcommand{\e}{\varepsilon}
\newcommand{\0}{\emptyset}
\newcommand{\sub}{\subseteq}
\newcommand{\rest}{\!\restriction\!}
\newcommand{\setmins}{\hspace{-.25mm} \setminus \hspace{-.25mm}}
\newcommand{\norm}[1]{\lVert#1\rVert}
\newcommand{\T}{\mathcal{T}}
\begin{document}

\title{Linear operators with infinite entropy}
\author[W. R. Brian]{Will Brian}
\address[W. R. Brian]{
Department of Mathematics and Statistics,
University of North Carolina at Charlotte,
9201 University City Blvd.,
Charlotte, NC 28223}
\email{wbrian@uncc.edu}
\author[J. P. Kelly]{James P. Kelly}
\address[J. P. Kelly]{Department of Mathematics, Christopher Newport University, Newport News, VA 23606, USA}
\email{james.kelly@cnu.edu}
\subjclass[2010]{54H20, 47A16, 47B37, 37B40}
\keywords{translation operators, weighted Lebesgue space, topological entropy}

\begin{abstract}
We examine the chaotic behavior of certain continuous linear operators on infinite-dimensional Banach spaces, and provide several equivalent characterizations of when these operators have infinite topological entropy. 

%For example, these operators have infinite entropy if and only if they fail to be uniformly equicontinuous, if and only if not every point tends to $0$ under iteration. Another characterization gives a condition on the weight function $v(x)$ of the weighted Lebesgue space, namely that the ratio $\nicefrac{v(y)}{v(z)}$, for $z \geq y$, takes arbitrarily high values.% for arbitrarily large values of $y$. 

For example, it is shown that infinite topological entropy is equivalent to non-zero topological entropy for translation operators on weighted Lebesgue function spaces. In particular, finite non-zero entropy is impossible for this class of operators, which answers a question raised by Yin and Wei.
\end{abstract}

\maketitle

%%%%%%%%%%%%
\section{Introduction}
%%%%%%%%%%%%

In this paper we examine the chaotic behavior of certain continuous linear operators on infinite-dimensional Banach spaces: namely, left translation operators on the weighted Lebesgue function spaces $L^p_v(\R_+)$ and on the related spaces $C_{0,v}(\R_+)$. Our main result is to characterize when such operators have infinite topological entropy.

The notion of ``chaotic behavior'' does not have a single precise meaning in topological dynamics: it indicates vaguely that a dynamical system becomes mixed up and disordered over time. This vague notion has been made precise via many different (and inequivalent) definitions of chaos: there is Devaney chaos \cite{Devaney}, or the specification property \cite{Bowen,KLO}, or the property of having infinite topological entropy \cite{AKM}, for example. Similarly, there are many different (and inequivalent) notions of anti-chaotic behavior in topological dynamical systems: equicontinuity, for example, or the property of having zero topological entropy. 

Continuous linear mappings on infinite-dimensional vector spaces can be highly chaotic; see, e.g., \cite{Bayart&Matheron,GEP}.
However, for such mappings, and in particular for the translation operators discussed in this paper, the distinctions between many different notions of chaos and anti-chaos disappear. Strong forms of chaos become equivalent to seemingly much weaker forms of chaos, or to the mere absence of certain anti-chaos properties.

However, not all forms of chaotic behavior are equivalent for the operators under discussion. The picture that emerges in this paper seems to be that the translation operators on $L^p_v(\R_+)$ organize themselves, for the most part, into three tiers of increasingly chaotic behavior. (See Figure 1 in the following section.) 
Our main result is to delineate the weakest tier of chaotic behavior by proving that several weak versions of chaos are equivalent in this context:
\begin{maintheorem}
Let $X$ be one of the Banach spaces $L^p_v(\R_+)$ or $C_{0,v}(\R_+)$, where $v$ is an admissible weight function, and let $\T = \set{T_t}{t \in \R_+}$ be the semigroup of left translation operators on $X$. The following are equivalent:
\begin{enumerate}
\item $\sup \!\set{ \frac{v(x)}{v(y)} }{ y \geq x } = \infty$.
\item For some $f \in X$, $\lim_{t \to \infty} T_tf \,\neq\, 0$.
\item $\T$ is not uniformly bounded.
\item $\T$ is not uniformly equicontinuous.
\item $\T$ is not equicontinuous.
\item $\T$ has nonzero entropy.
\item $\T$ has infinite entropy.
\end{enumerate}
\end{maintheorem}

\noindent All of the terms in the statement of this theorem will be defined in the following section. The equivalence of $(6)$ and $(7)$ shows that finite, nonzero entropy is impossible for the left translation operators on the weighted Lebesgue function spaces $L^p_v(\R_+)$, answering a question of Yin and Wei \cite{Yin&Wei}.

In addition to this theorem, we also prove that a close relative of $(2)$ is equivalent to hypercyclicity, and thus fits into the middle tier of chaos mentioned above. Namely, we show that $\T$ is hypercyclic (i.e., has a dense orbit) if and only if there is some $f \in X$ and some $[0,a] \sub \R_+$ such that $\lim_{t \to \infty} \norm{(T_tf) \chi_{{}_{[0,a]}}} \,\neq\, 0$. In other words, $\T$ is hypercyclic if and only if some $f \in X$ does not tend to $0$ on some fixed bounded interval.

%%%%%%%%%%%%
\section{Preliminaries}
%%%%%%%%%%%%

Let $\R_+ = [0,\infty)$. The \emph{weighted Lebesgue space} $L^p_v(\R_+)$ is defined as
$$L^p_v(\R_+) = \left\{ f \colon \R_+ \to \R \ : \int_0^\infty |f(x)|^p v(x) \,dx < \infty \right\},$$
where $0 < p < \infty$ and $v: \R_+ \to \R_+$ is an \emph{admissible weight function},
which means that  $v$ is strictly positive, locally integrable, and there exist some $M \geq 1$ and $w\in\R_+$ such that 
$$v(x) \leq Me^{wt}v(x+t)$$
for all $t \geq 0$. This definition follows \cite[chapter 7]{GEP}. 

The admissibility condition ensures that the translation operators $T_t$ (defined below) are all continuous, and that they form a strongly continuous semigroup under composition. 
It is also worth noting that for admissible $v(x)$, on any finite interval $[0,a] \sub \R_+$ we have $\inf_{x \in [0,a]} v(x) \geq \nicefrac{v(0)}{Me^{wa}} > 0$, and $\sup_{x \in [0,a]} v(x) \leq Me^{wa}v(a) < \infty$. Thus the condition on $v(x)$ in the statement of our main theorem, that $\sup \!\set{ \frac{v(x)}{v(y)} }{ y \geq x } = \infty$, cannot be satisfied on a bounded interval; in other words, it is a condition concerning the behavior of $v(x)$ ``at infinity.''

Formally, we consider two functions in $L^p_v(\R_+)$ to be equal if they are equal Lebesgue almost everywhere. This does not play an important part in what follows, and we will abuse notation slightly (as above) by speaking of the elements of $L^p_v$ as functions, and not equivalence classes of functions. However, making this identification allows us to assert that setting
$$\norm{f} \,=\, \left( \int_0^\infty |f(x)|^pv(x) \,dx \right)^{\!\nicefrac{1}{p}}$$
defines a norm on $L^p_v(\R_+)$.
With this norm, $L^p_v(\R_+)$ is a Banach space.

Similarly, we also define
$$C_{0,v}(\R_+) = \left\{ f \colon \R_+ \to \R \ \colon f \text{ is continuous, and} \lim_{x\to\infty} |f(x)| v(x) = 0 \right\}.$$
We define a norm on $C_{0,v}(\R_+)$ by
$$\norm{f} \,=\, \sup\set{ |f(x)|v(x) }{x \in \R_+},$$
and note that $C_{0,v}$ is a Banach space with this norm.

Let $X$ denote one of the Banach spaces $L^p_v(\R_+)$ or $C_{0,v}$, where $v$ is an admissible weight function. For each $t \geq 0$, let $T_t: X \to X$ denote the left translation operator defined by setting
$$T_t(f)(x) = f(x+t)$$
for all $x \geq 0$. Let $\T = \set{T_t}{t \geq 0}$, and note that each member of $\T$ is a continuous linear operator on $X$.

The \emph{orbit} of a point $f \in X$ under $\T$ is $\set{ T_tf }{ t \geq 0 } \sub X$. We say $\T$ is \emph{hypercyclic} if there is a point whose orbit is a dense subset of $X$. A point $f \in X$ is \emph{periodic} for $\T$ if there exists $t > 0$ such that $T_t f = f$. We say that $\T$ is \emph{Devaney chaotic} if it is hypercyclic and the set of periodic points is dense in $X$.

The following theorem delineates the ``strongest tier of chaos" described in the introduction for the spaces $L^p_v(\R_+)$ and $C_{0,v}(\R_+)$. A proof can be found in \cite{BKT}, although many of the implications inherent in the theorem predate that paper (see the references therein).
We refer the reader to \cite{BMGMAP} or \cite{Mangino&Peris} for the definition of \emph{frequently hypercyclic}; roughly, it states that some member of $X$ not only has a dense orbit, but that it visits each open subset of $X$ ``frequently.'' We refer the reader to \cite{BMGP} or \cite{BKT} for the definition of the specification property in the present context, an adaptation of Bowen's definition for compact metric spaces in \cite{Bowen}. 

\noindent\begin{minipage}{\textwidth}
\begin{theorem*}
Let $X = L^p_v(\R_+)$, where $v$ is an admissible weight function, and let $\T = \set{T_t}{t \in \R_+}$ be the semigroup of left translation operators on $X$. The following are equivalent:
\begin{enumerate}
\item $\displaystyle \int_0^\infty v(x) \,dx < \infty$.
\vspace{1mm}
\item $\T$ is frequently hypercyclic.
\item $\T$ has the specification property.
\item $\T$ has Devaney chaos.
\item Some $f \in X \setmins \{0\}$ is periodic.
\end{enumerate}
\end{theorem*}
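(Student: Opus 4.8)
The plan is to derive each of $(2)$, $(3)$, $(4)$ from $(1)$ and then close the circle with the (mostly easy) implications $(4)\Rightarrow(5)$, $(3)\Rightarrow(5)$, $(5)\Rightarrow(1)$, together with the one genuinely delicate converse $(2)\Rightarrow(1)$. Four consequences of admissibility are used throughout. Integrating $v(y)\le Me^{ws}v(y+s)$ over $s\in[0,1]$ gives $v(y)\le c_0\int_y^{y+1}v$ with $c_0=Me^{w}$; since $\int_0^\infty v<\infty$, this forces $v(x)\to0$. Moreover, for every $t>1$ and every $w'\ge0$ the unit intervals $[w'+nt,w'+nt+1]$ $(n\ge1)$ are pairwise disjoint, so $\sum_{n\ge1}v(w'+nt)\le c_0\sum_{n\ge1}\int_{w'+nt}^{w'+nt+1}v\le c_0\int_t^\infty v$, which tends to $0$ as $t\to\infty$, uniformly in $w'$. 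Finally, a term-by-term application of admissibility gives $\sum_{n\ge0}v(r+nt)\le Me^{wt}\sum_{n\ge0}v(r'+nt)$ for all $r,r'\in[0,t)$, while $\int_0^\infty v=\int_0^t\sum_{n\ge0}v(r+nt)\,dr$ by Tonelli.

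For $(1)\Rightarrow(3)$, I would verify the specification property by producing, for each tolerance $\e>0$ and each bounded family of target states, a single periodic point that $\e$-traces any collection of orbit segments through those states whenever the time gaps exceed a threshold $N$. First replace each target state by a bounded, boundedly supported function, absorbing the $L^p$-error into $\e$ (these are dense, since $\int_a^\infty|g|^pv\to0$). Then let $y\in X$ be obtained by placing the pieces at the required offsets inside one long interval $[0,t)$, padding the gaps with $0$, and extending $t$-periodically. One checks $y\in X$ (the relevant $\sum_{n\ge0}v(r+nt)$ is finite), and that on each segment the error -- which, once the pieces agree with the prescribed states where those are supported, comes only from the mass of a translate $T_sy$ contributed by pieces (including all periodic repeats) lying a distance $\ge N$ ahead -- is, in $p$-th power, at most $(\text{a constant depending on the family})\cdot\int_N^\infty v$, small for $N$ large, since those contributions involve translates of $v$ over essentially disjoint windows beyond $N$ and one sums $p$-th powers rather than norms. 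The same construction with one piece shows that for bounded $\tilde g$ supported on $[0,a]$ and $t\ge a$, the $t$-periodic extension $h_t$ satisfies $\norm{h_t-\tilde g}^p=\int_0^a|\tilde g(w)|^p\sum_{n\ge1}v(w+nt)\,dw\le c_0\,a\,\norm{\tilde g}_\infty^p\int_t^\infty v\to0$; so periodic points are dense, which gives the density half of Devaney chaos.

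For $(1)\Rightarrow(2)$ I would apply the Frequent Hypercyclicity Criterion to the time-one operator $T_1$, with dense set $X_0$ the bounded, boundedly supported functions and right inverse $S$ the right-translation by $1$, so $T_1S=\mathrm{id}$ on $X_0$. Here $\sum_{n\ge0}T_1^nx$ is a finite sum ($T_1^nx=0$ once $n$ exceeds the support of $x$), and $\sum_{n\ge1}S^nx$ converges unconditionally: the functions $S^nx$ have uniformly finitely overlapping supports, so $\norm{\sum_{n\in F}\pm S^nx}^p\le C\sum_{n\in F}\norm{S^nx}^p$, while $\sum_{n\ge1}\norm{S^nx}^p\le\norm{x}_\infty^p\sum_n\int_n^{n+a}v\le C'\norm{x}_\infty^p\int_0^\infty v<\infty$. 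Hence $T_1$ is frequently hypercyclic, and by strong continuity of $\T$ so is the semigroup; this in particular gives hypercyclicity, which together with the density of periodic points from the previous paragraph yields Devaney chaos, i.e.\ $(4)$. (Alternatively, $(3)\Rightarrow(4)$ by the standard fact that the specification property implies Devaney chaos.)

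The converses: $(4)\Rightarrow(5)$ is immediate, since Devaney chaos forces the periodic points to be dense in $X\neq\{0\}$ and hence to include a nonzero point; the specification property in $(3)$ likewise produces periodic points, so $(3)\Rightarrow(5)$. For $(5)\Rightarrow(1)$: if $f\neq0$ is $t$-periodic then $|f|\ge\dlt>0$ on a set $S\sub[0,t)$ of positive measure, so by Tonelli $\infty>\norm{f}^p\ge\dlt^p\int_S\sum_{n\ge0}v(r+nt)\,dr$, whence $\sum_{n\ge0}v(r_0+nt)<\infty$ for some $r_0\in S$; by the term-by-term comparison $\sum_{n\ge0}v(r+nt)\le Me^{wt}\sum_{n\ge0}v(r_0+nt)$ for every $r$, so $\int_0^\infty v=\int_0^t\sum_{n\ge0}v(r+nt)\,dr<\infty$. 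That leaves $(2)\Rightarrow(1)$, which I expect to be the main obstacle: frequent hypercyclicity does not, for general operators, entail periodic points or chaos, so it cannot be routed through $(5)$. I would obtain it by passing to the time-one operator $T_1$ (which is also frequently hypercyclic), which on the decomposition $L^p_v(\R_+)\cong(\bigoplus_k L^p_v[k,k+1])_{\ell^p}$ is a backward-shift-type operator, and invoking the known fact that a frequently hypercyclic backward shift must already be chaotic -- equivalently, must satisfy the summability condition that here is exactly $\int_0^\infty v<\infty$; alternatively one cites that this equivalence for translation semigroups is already available in the literature.
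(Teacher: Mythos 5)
First, a point of reference: the paper does not prove this statement at all; it is quoted as background in Section 2 with the remark that ``a proof can be found in \cite{BKT}, although many of the implications inherent in the theorem predate that paper.'' So there is no in-paper argument to compare yours against, and your proposal has to be judged on its own. Most of it holds up: $(5)\Rightarrow(1)$ via Tonelli plus the term-by-term admissibility comparison is the standard argument and is correct (modulo the harmless index shift when $r>r'$); the density of $t$-periodic extensions of bounded, boundedly supported functions, giving $(1)\Rightarrow(5)$ and half of $(1)\Rightarrow(4)$, is correct; and $(1)\Rightarrow(2)$ via the Frequent Hypercyclicity Criterion applied to $T_1$, with right translation as $S$ and the finitely-overlapping-supports estimate, is sound. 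Two smaller debts: the transfer ``$T_1$ frequently hypercyclic $\Rightarrow$ $\T$ frequently hypercyclic'' needs either a citation or the short continuity argument (the map $(s,z)\mapsto T_sz$ is jointly continuous and every bounded, boundedly supported function lies in the range of each $T_s$, so visits of $T_1^nx$ to a suitable open set spread to time intervals of fixed positive length); and for $(1)\Rightarrow(3)$ recall that the specification property used in \cite{BMGP,BKT} is formulated via an increasing sequence of $T$-invariant \emph{compact} sets with dense union on which Bowen specification holds, so your periodic-tracing construction on a dense family of states is the right core idea but still owes the construction of those compacta (a Cantor-set construction in the spirit of the proof of $(1)\Rightarrow(7)$ of Theorem~\ref{thm:main}).

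The genuine gap is $(2)\Rightarrow(1)$, which you yourself flag as the main obstacle. Your primary route --- identify $T_1$ with a backward shift on $\bigl(\bigoplus_k L^p_v[k,k+1]\bigr)_{\ell^p}$ and invoke the Bayart--Ruzsa theorem that frequently hypercyclic weighted backward shifts are chaotic --- does not go through as stated: that theorem concerns \emph{scalar} weighted shifts on $\ell^p$, whereas your $T_1$ is an operator-weighted shift whose blocks are infinite-dimensional. The obvious repairs fail: restricting to the $T_1$-invariant subspace of functions constant on each $[k,k+1)$ does produce a scalar weighted shift with the right summability condition, but frequent hypercyclicity does not pass to invariant subspaces; and there is in general no bounded projection onto that subspace commuting with $T_1$ (averaging over $[k,k+1)$ is unbounded when $v$ grows quickly, e.g.\ $v(x)=e^{x^2}$, which is admissible). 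So one must either rerun the Bayart--Ruzsa positive-lower-density/difference-set argument directly in the continuous setting --- which is what the literature on frequently hypercyclic translation semigroups does --- or simply cite that result, which is your fallback and is, in effect, also what the present paper does by deferring to \cite{BKT} and the references therein. As a self-contained proof, then, everything except $(2)\Rightarrow(1)$ is essentially in order, but for that implication you have only an invalid reduction plus a citation.
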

\vspace{1mm}
\end{minipage}

In \cite{BKT}, it was shown that these equivalent conditions all imply that $\T$ has infinite entropy, but that this implication does not reverse. It is worth mentioning that some of these results hold in a broader context than left translation operators on $L^p_v(\R_+)$. For example, the specification property is equivalent to Devaney chaos for backward shift operators on Banach sequence spaces \cite{BMGP} and for weighted backward shifts on sequence $F$-spaces \cite{BMGMAP}. However, the above theorem does not hold with $C_{0,v}(\R_+)$ in the place of $L^p_v(\R_+)$.

Recall that a strongly continuous semigroup $\T=\set{T_t}{t\geq0}$ on a space $X$ is \emph{topologically transitive} if for all nonempty open $U,V \sub X$, $T_t(U) \cap V \neq \0$ for arbitrarily large $t$. If $X$ is separable, then this is equivalent to $\T$ being hypercyclic (having a point with a dense orbit). In \cite{DSW}, it was shown that the translation operators $T_t$ on $L^p_v(\R_+)$ or $C_{0,v}(\R_+)$ are topologically transitive (and thus hypercyclic) if and only if $\liminf_{x \to \infty} v(x) = 0$. Of course, this condition on $v(x)$ is strictly weaker than the integrability condition from the previous theorem.
In \cite{Yin&Wei}, Yin and Wei show a number of other chaotic behaviors to be equivalent to the hypercyclicity of the mapping semigroup $\T$. These results are summarized in the following theorem.

\begin{theorem*}
Let $X$ be one of the Banach spaces $L^p_v(\R_+)$ or $C_{0,v}(\R_+)$, where $v$ is an admissible weight function, and let $\T = \set{T_t}{t \in \R_+}$ be the semigroup of left translation operators on $X$. The following are equivalent:
\begin{enumerate}
\item $\displaystyle \liminf_{x \to \infty} v(x) = 0$.
\vspace{1mm}
\item $\T$ is hypercyclic.
\item $\T$ is topologically transitive.
\item Some $f \in X \setmins \{0\}$ is a recurrent point of $\T$.
\item Some $f \in X \setmins \{0\}$ is a non-wandering point of $\T$.
\item Some $f \in X \setmins \{0\}$ has a non-trivial $\omega$-limit set.
\end{enumerate}
\end{theorem*}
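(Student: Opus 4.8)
The plan is to derive every equivalence from $(1)\Leftrightarrow(2)\Leftrightarrow(3)$, which is exactly the theorem of \cite{DSW} (together with the fact that $L^p_v(\R_+)$ and $C_{0,v}(\R_+)$ are separable, so that for these semigroups topological transitivity and hypercyclicity coincide), by closing the cycle $(2)\Rightarrow(4)\Rightarrow(6)\Rightarrow(5)\Rightarrow(1)$. The first three implications are soft; the last one carries the content.

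For $(2)\Rightarrow(4)$ I would first observe that each $T_t$ with $t>0$ has dense range: the members of $X$ of bounded support are dense in $X$, and if $h\in X$ is supported in $[0,b]$ then $h=T_tf$, where $f$ agrees with $x\mapsto h(x-t)$ on $[t,t+b]$ and is constant (equal to $h(0)$) on $[0,t]$; such an $f$ lies in $X$ because $v$ is bounded on $[0,t+b]$. Now let $g$ be a hypercyclic vector, which is necessarily nonzero. Given a nonempty open $U\ni g$ and a bound $t_0>0$, the set $T_{t_0}^{-1}(U)$ is open, and it is nonempty because $U$ meets the dense range of $T_{t_0}$; since the orbit of $g$ is dense, it meets $T_{t_0}^{-1}(U)$, so $T_{t_0+s}g=T_{t_0}(T_sg)\in U$ for some $s\geq0$. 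As $t_0+s\geq t_0$, this says precisely that $g$ is recurrent, giving $(4)$.

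The implication $(4)\Rightarrow(6)$ is immediate, since a recurrent point $f\neq0$ lies in $\omega(f)$, so $\omega(f)$ contains a nonzero point and is non-trivial. For $(6)\Rightarrow(5)$, suppose $f\neq0$ has $h\in\omega(f)$ with $h\neq0$, and fix $t_n\to\infty$ with $T_{t_n}f\to h$. Given a neighbourhood $V$ of $h$ and a bound $t_0>0$, choose $m<n$ with $T_{t_m}f\in V$, $T_{t_n}f\in V$, and $t_n-t_m\geq t_0$ (possible because the first two conditions hold for all large indices and, with $m$ fixed, the third holds for all large $n$). Then $T_{t_n}f=T_{t_n-t_m}(T_{t_m}f)$ lies in $T_{t_n-t_m}(V)\cap V$, so $h$ is a nonzero non-wandering point and $(5)$ holds.

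The crux is $(5)\Rightarrow(1)$, which I would prove in contrapositive form: if $\liminf_{x\to\infty}v(x)=\dlt>0$ then \emph{every} $f\in X\setmins\{0\}$ is wandering, so $\T$ has no nonzero non-wandering point. Take $N$ with $v(x)>\dlt$ for all $x\geq N$. Treating the case $X=L^p_v(\R_+)$, the weight is then bounded below on $[N,\infty)$, so the ``unweighted tails'' $\rho_s(g):=\bigl(\int_s^\infty|g|^p\bigr)^{1/p}$ are finite, satisfy $\rho_s(g)\leq\dlt^{-1/p}\norm g$, and obey $\rho_s(f)\to0$ as $s\to\infty$ (for $X=C_{0,v}(\R_+)$ the quantity $\sup_{x\geq s}|f(x)|$ plays the same role, tending to $0$ because $\liminf v>0$ forces $f(x)\to0$). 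Given $f\neq0$, pick $a\geq N$ large enough that $\norm{f\chi_{[0,a]}}\geq\tfrac12\norm f$, put $V_a:=\sup_{[0,a]}v<\infty$, pick $\e>0$ small enough — in terms of $\norm f$, $V_a$, $\dlt$, $p$ only — that $V_a^{1/p}(\dlt^{-1/p}+1)\e+\e<\tfrac12\norm f$, and finally pick $T^*\geq a$ with $\rho_t(f)<\e$ for all $t\geq T^*$. If some $g\in B(f,\e)$ had $T_tg\in B(f,\e)$ for some $t\geq T^*$, then, since $(T_tg)(x)=g(x+t)$ with $x+t\geq N$ for $x\in[0,a]$, we would obtain $\norm{(T_tg)\chi_{[0,a]}}\leq V_a^{1/p}\rho_t(g)\leq V_a^{1/p}\bigl(\rho_t(g-f)+\rho_t(f)\bigr)\leq V_a^{1/p}(\dlt^{-1/p}+1)\e<\tfrac12\norm f-\e$, whereas at the same time $\norm{(T_tg)\chi_{[0,a]}}\geq\norm{f\chi_{[0,a]}}-\norm{f-T_tg}>\tfrac12\norm f-\e$ — a contradiction. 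Hence $T_t(B(f,\e))\cap B(f,\e)=\0$ for all $t\geq T^*$, i.e.\ $f$ is wandering. (The case $X=C_{0,v}(\R_+)$ is identical with sup-norms in place of $L^p$ norms, and $0<p<1$ is identical with the translation-invariant metric $d(f,g)=\int|f-g|^pv$ in place of $\norm\cdot$.) The one genuine obstacle is the uniformity in this last step: the bound on $\norm{(T_tg)\chi_{[0,a]}}$ must hold for every $g$ in a fixed ball about $f$, and this is precisely what the split $\rho_t(g)\leq\rho_t(g-f)+\rho_t(f)$ and the comparison $\rho_t(\cdot)\leq\dlt^{-1/p}\norm\cdot$ are designed to give; provided the order of the choices ($a$, then $\e$, then $T^*$) is respected, there is no circularity, and everything else is quoted from \cite{DSW} or routine.
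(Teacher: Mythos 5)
Your proposal is correct in substance, but it is worth being clear that it is not parallel to anything proved in this paper: the six equivalences you are asked about are quoted here from Yin and Wei \cite{Yin&Wei} (with $(1)\Leftrightarrow(2)\Leftrightarrow(3)$ going back to \cite{DSW}), and the paper's own contribution in this neighborhood is only Theorem~\ref{thm:bonus}, which adds condition $(7)$ by showing that a nonzero recurrent point gives $(*)$ and that $(*)$ forces $\liminf_{x\to\infty}v(x)=0$. Your cycle $(2)\Rightarrow(4)\Rightarrow(6)\Rightarrow(5)\Rightarrow(1)$ on top of DSW is a legitimate self-contained route: the dense-range argument for $T_{t}$ plus density of the orbit does give recurrence of the hypercyclic vector, the passages $(4)\Rightarrow(6)\Rightarrow(5)$ are the soft steps you describe, and your contrapositive of $(5)\Rightarrow(1)$ is essentially a uniform-over-a-ball strengthening of the first half of the paper's proof of Theorem~\ref{thm:bonus}: there the authors show that $\liminf v>0$ makes $\norm{(T_tf)\chi_{{}_{[0,a]}}}\to 0$ for each fixed $f$, which kills recurrent points, while you need the same decay simultaneously for all $g$ in a ball $B(f,\e)$ to kill non-wandering points, and your splitting $\rho_t(g)\le\rho_t(g-f)+\rho_t(f)$ together with $\rho_t(\cdot)\le\dlt^{-1/p}\norm{\cdot}$ is exactly what supplies that uniformity. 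So your argument proves slightly more per step (wandering of every nonzero point, rather than just failure of recurrence), at the cost of the extra bookkeeping in the choice of $a$, $\e$, $T^*$, whereas the paper simply leans on \cite{Yin&Wei} for $(4)$, $(5)$, $(6)$.

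Two small repairs you should make, neither of which is a real gap. First, $\liminf_{x\to\infty}v(x)=\dlt>0$ does not give $v(x)>\dlt$ for large $x$; fix any $0<\dlt'<\liminf v$ and run the estimate with $\dlt'$. Second, in the dense-range step for $L^p_v(\R_+)$ you need not only that $v$ is bounded above on $[0,t+b]$ but also that it is bounded below on $[0,b]$ (so that $\int_0^b|h|^p\,dy<\infty$ and hence $\int_t^{t+b}|h(x-t)|^pv(x)\,dx<\infty$); both facts follow from admissibility, as noted in the paper's preliminaries, so this is a one-line addition. With those adjustments, and with the $0<p<1$ and $C_{0,v}$ cases handled as you indicate (working with the metric $\int|f-g|^pv$ or with suprema), the proposal stands.
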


\noindent Here ``trivial'' means either empty or equal to $\{0\}$. We prove in the following section that equivalent to all these conditions is

\begin{itemize}
\item[$(7)$] There is some $f \in X$ and some bounded $[0,a] \sub \R_+$ such that $$\lim_{t \to \infty} \norm{(T_tf) \chi_{{}_{[0,a]}}} \neq 0.$$
\end{itemize}

\noindent Together, these seven equivalent statements represent the ``middle tier'' of chaos mentioned in the introduction.

%In addition to proving several of these equivalences, 
Yin and Wei also show in \cite{Yin&Wei} that:
\begin{itemize}
\item[$\circ$] Any of the conditions from the previous theorem(s) imply that $\T$ has infinite entropy.
\item[$\circ$] If the weight function $v(x)$ is bounded, then each of the conditions from the previous theorem is equivalent to $\T$ having infinite entropy.
\item[$\circ$] There is an unbounded weight function $v(x)$ such that the semigroup of translation operators on $L^p_v(\R_+)$ is not hypercyclic, but nonetheless has infinite entropy.
\end{itemize}

The following picture summarizes the results of the previous two theorems, along with the main result of this paper stated in the introduction.

\vspace{2mm}
\begin{center}
\begin{figure}[h]
\begin{tikzpicture}[xscale=1,yscale=.9]

%%%%%%%%%%%%
\draw[rounded corners] (-6,9) rectangle (6,12.25);

\node [anchor=center] at (-2.3,10.4) {\footnotesize $\T$ has}; 
\node [anchor=center] at (-2.3,10) {\footnotesize Devaney}; 
\node [anchor=center] at (-2.3,9.6) {\footnotesize chaos}; 

\node [anchor=center] at (-4.6,11.65) {\footnotesize $\T$ has the}; 
\node [anchor=center] at (-4.6,11.25) {\footnotesize specification}; 
\node [anchor=center] at (-4.6,10.85) {\footnotesize property}; 

\node [anchor=center] at (0,11.3) {\footnotesize $\displaystyle \int_0^\infty \!\!\!v(x) \,dx \,<\, \infty$}; 

\node [anchor=center] at (2.3,10.4) {\footnotesize $\T$ admits a}; 
\node [anchor=center] at (2.3,10.0) {\footnotesize nonzero}; 
\node [anchor=center] at (2.3,9.6) {\footnotesize periodic point}; 

\node [anchor=center] at (4.6,11.65) {\footnotesize $\T$ is}; 
\node [anchor=center] at (4.6,11.25) {\footnotesize frequently}; 
\node [anchor=center] at (4.6,10.85) {\footnotesize hypercyclic}; 

%%%%%%%%%%%%
\draw[rounded corners] (-6,3) rectangle (6,7.25);

\node [anchor=center] at (-2.3,6.45) {\footnotesize $\T$ is transitive}; 

\node [anchor=center] at (2.3,6.45) {\footnotesize $\T$ is hypercyclic}; 

\node [anchor=center] at (-4.6,5.65) {\footnotesize there is a}; 
\node [anchor=center] at (-4.6,5.25) {\footnotesize nontrivial}; 
\node [anchor=center] at (-4.6,4.85) {\footnotesize $\omega$-limit set}; 

\node [anchor=center] at (-2.3,4.4) {\footnotesize $\T$ admits a}; 
\node [anchor=center] at (-2.3,4) {\footnotesize nonzero}; 
\node [anchor=center] at (-2.3,3.6) {\footnotesize nonwandering point}; 

\node [anchor=center] at (0,5.3) {\footnotesize $\displaystyle \liminf_{x \to \infty} \, v(x) \,=\, 0$}; 

\node [anchor=center] at (2.3,4.4) {\footnotesize $\T$ admits a}; 
\node [anchor=center] at (2.3,4.0) {\footnotesize nonzero}; 
\node [anchor=center] at (2.3,3.6) {\footnotesize recurrent point}; 

\node [anchor=center] at (4.6,5.7) {\footnotesize on some $[0,a]$,}; 
\node [anchor=center] at (4.6,5.3) {\footnotesize not every point}; 
\node [anchor=center] at (4.6,4.9) {\footnotesize tends to $0$}; 

%%%%%%%%%%%%
\draw[rounded corners] (-6,-2.3) rectangle (6,1.25);

\node [anchor=center] at (-4.2,.45) {\footnotesize $\T$ has}; 
\node [anchor=center] at (-4.2,.05) {\footnotesize infinite entropy}; 

\node [anchor=center] at (-4.2,-1.1) {\footnotesize $\T$ has}; 
\node [anchor=center] at (-4.2,-1.5) {\footnotesize nonzero entropy}; 

\node [anchor=center] at (0,.3) {\footnotesize $\displaystyle \sup \set{\textstyle \frac{v(x)}{v(y)}}{y \geq x} = \infty$}; 

\node [anchor=center] at (4.2,.45) {\footnotesize not every point}; 
\node [anchor=center] at (4.2,.05) {\footnotesize tends to $0$}; 
%\node [anchor=center] at (4.4,-.15) {\footnotesize under iteration of $\T$}; 

\node [anchor=center] at (4.2,-.9) {\footnotesize $\T$ is not}; 
\node [anchor=center] at (4.2,-1.3) {\footnotesize (uniformly)}; 
\node [anchor=center] at (4.2,-1.7) {\footnotesize equicontinuous}; 

\node [anchor=center] at (0,-.9) {\footnotesize $\T$ is not}; 
\node [anchor=center] at (0,-1.3) {\footnotesize uniformly}; 
\node [anchor=center] at (0,-1.7) {\footnotesize bounded}; 

%%%%%%%%%%%%
\draw (-.3,8.7)--(-.3,7.77);
\draw (.3,8.7)--(.3,7.77);
\draw (-.45,7.9)--(0,7.5);
\draw (.45,7.9)--(0,7.5);

\draw (-.3,2.7)--(-.3,1.77);
\draw (.3,2.7)--(.3,1.77);
\draw (-.45,1.9)--(0,1.5);
\draw (.45,1.9)--(0,1.5);

\end{tikzpicture}
\caption{Three main tiers of chaotic behavior for $L^p_v(\R_+)$}
\end{figure}
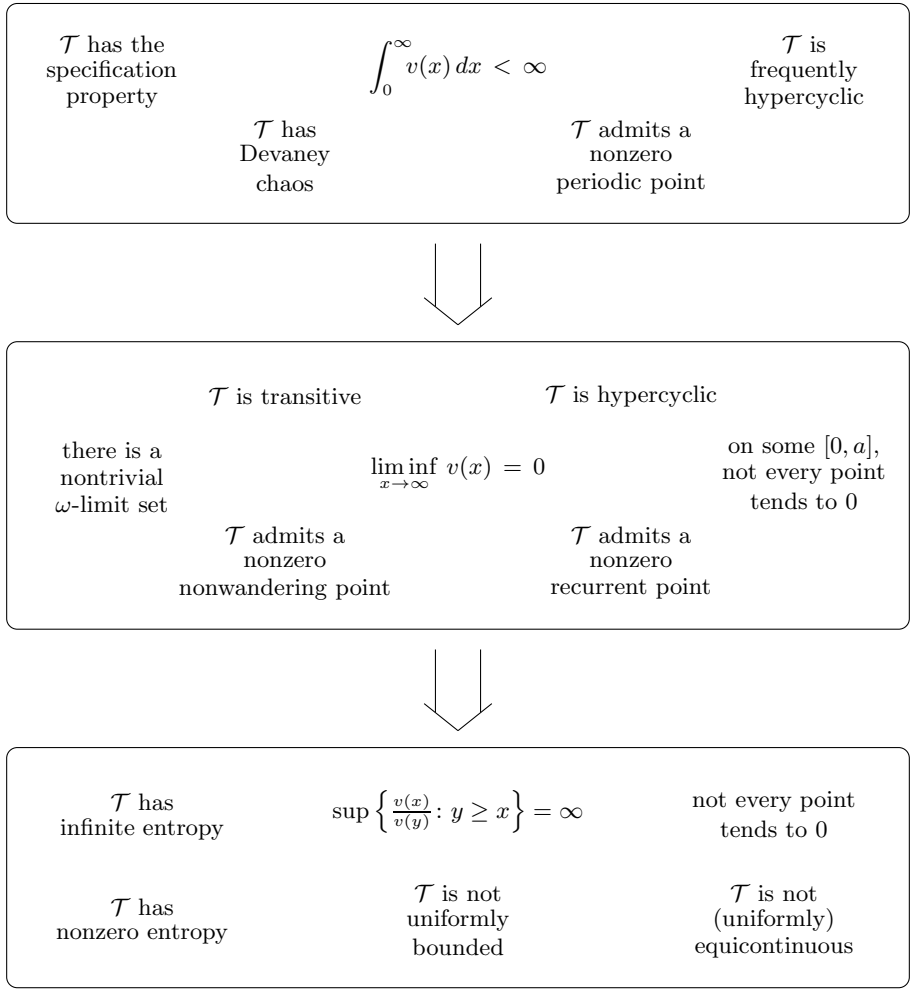
\end{center}

As this picture indicates, surprisingly many of the (ostensibly different) possible behaviors of $L^p_v(\R_+)$ fall into only three categories. Many, but not all. It is important to remember that this picture is incomplete, and is not meant to suggest that the $L^p_v(\R_+)$ can exhibit only three topologically distinct types of behaviors. This is discussed further in Section 4.

\vspace{2mm}

The notion of topological entropy was introduced by Adler, Konheim, and McAndrew in \cite{AKM}. We use the notation developed by Bowen in \cite{Bowen}.
	
\begin{definition}
Let $X$ be a separable space with translation-invariant metric $d$, and let $\T = \set{T_t}{t \geq 0}$ be a strongly continuous semigroup of operators on $X$. Let $K \sub X$ be compact, and let $t > 0$, and let $\e > 0$. A set $S \sub K$ is called \emph{$(t,\e)$-separated} if for every $f,g \in S$ with $f \neq g$, we have $d(T_uf,T_ug) \geq \e$ for some $u \in [0,t]$. We denote by $s_{t,\e}(\T,K)$ the largest cardinality of a $(t,\e)$-separated subset of $K$, and define
$$h(\T,K) \,=\, \lim_{\e \to 0} \, \limsup_{t \to \infty} \, \frac{1}{t} \log s_{t,\e}(\T,K)$$

If $X$ is already compact, then the topological entropy of $\T$ is defined as $h(\T,X)$. For non-compact spaces (such as the weighted Lebesgue spaces discussed here), the \emph{topological entropy} of $\T$ is
$$h(\T) \,=\, \sup \set{ h(\T,K) }{ K \sub X \text{ is compact} }.$$
\end{definition}

Let us note that the topological entropy of a single operator $T: X \to X$ can be expressed in our notation as the topological entropy of the semigroup $\T = \set{T^n}{n \in \N}$. In the case of continuous semigroups of operators, studying the full semigroup $\T = \set{T_t}{t \in \R_+}$ is related (and often equivalent) to studying a discretization of it. For example, for some fixed $t_0 > 0$, the entropy of the single operator $T_{t_0}$ is related to the entropy of the continuous semigroup $\T$ via the equation $h(T_{t_0}) = t_0h(\T)$ \cite{Hoock}.

A family $\T$ of functions on a metric space $X$ is \emph{equicontinuous} if for every $\e > 0$ and $x \in X$, there is some $\dlt > 0$ such that for any $T \in \T$ and any $y \in X$, if $d(x,y) < \dlt$ then $d(T(x),T(y)) < \e$. By not allowing $\dlt$ to depend on $x$, we arrive at a stronger notion: $\T$ is \emph{uniformly equicontinuous} if for every $\e > 0$ there is some $\dlt > 0$ such that for any $T \in \T$ and any $x,y \in X$, if $d(x,y) < \dlt$ then $d(T(x),T(y)) < \e$.

A family $\T$ of operators on a Banach space $X$ is \emph{uniformly bounded} if there is some $B \geq 0$ such that for any $x \in X$ and $T \in \T$, $\norm{T(x)} \leq B\norm{x}$.

%%%%%%%%%%%%
\section{Two theorems}
%%%%%%%%%%%%

We begin this section with our characterization of hypercyclicity stated in the previous section.

\begin{theorem}\label{thm:bonus}
Let $X$ be one of the Banach spaces $L^p_v(\R_+)$ or $C_{0,v}(\R_+)$, where $v$ is an admissible weight function, and let $\T$ be the semigroup of left translation operators on $X$. Then $\T$ is hypercyclic if and only if there is some $f \in X$ and some bounded $[0,a] \sub \R_+$ such that 
$\textstyle \lim_{x \to \infty} \norm{(T_tf) \chi_{{}_{[0,a]}}} \neq 0.$
\end{theorem}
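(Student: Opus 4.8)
The plan is to reduce the equivalence to the characterization of hypercyclicity proved in \cite{DSW}, namely that $\T$ is hypercyclic if and only if $\liminf_{x\to\infty} v(x) = 0$; it then suffices to show that condition $(7)$ holds precisely when $\liminf_{x\to\infty} v(x) = 0$. Suppose first that $\liminf_{x\to\infty} v(x) > 0$, say $v(x)\geq c > 0$ for all $x\geq R$; I would show that $(7)$ fails for every $f\in X$ and every bounded $[0,a]$. If $X = L^p_v(\R_+)$, then convergence of $\int_0^\infty |f|^p v$ forces $\int_t^{t+a}|f(y)|^p\,dy \leq \frac1c\int_t^\infty |f(y)|^p v(y)\,dy \to 0$ as $t\to\infty$, so that, using $\sup_{x\in[0,a]}v(x) < \infty$ (a consequence of admissibility recorded in Section~2),
$$\norm{(T_tf)\chi_{{}_{[0,a]}}}^p \,=\, \int_0^a |f(x+t)|^p v(x)\,dx \,\leq\, \Big(\sup_{x\in[0,a]} v(x)\Big)\int_t^{t+a}|f(y)|^p\,dy \,\longrightarrow\, 0.$$
If $X = C_{0,v}(\R_+)$, then $|f(x)|v(x)\to 0$ together with $v(x)\geq c$ for $x\geq R$ gives $\sup_{y\geq t}|f(y)|\to 0$, so $\norm{(T_tf)\chi_{{}_{[0,a]}}} \leq \big(\sup_{x\in[0,a]}v(x)\big)\sup_{y\geq t}|f(y)| \to 0$. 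Either way $(7)$ fails.

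Now suppose $\liminf_{x\to\infty} v(x) = 0$. Choose reals $1 < x_1 < x_2 < \cdots$ with $x_{n+1} > x_n + 1$, $x_n\to\infty$, and $v(x_n) < 2^{-n}$. The crucial point---and essentially the only substantive use of admissibility here---is that $v$ is then uniformly small on the entire unit interval immediately to the left of each $x_n$: for $x\in[x_n-1,x_n]$, writing $x_n = x+t$ with $t\in[0,1]$, admissibility gives $v(x)\leq Me^{wt}v(x_n)\leq Me^w 2^{-n}$. I would therefore let $f$ be supported on $\bigcup_n[x_n-1,x_n]$, equal on each $[x_n-1,x_n]$ to a ``bump'' of height $1$---a continuous tent when $X = C_{0,v}(\R_+)$, and simply $\chi_{[x_n-1,x_n]}$ when $X = L^p_v(\R_+)$. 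Then on each such interval $|f|\,v \leq Me^w 2^{-n}$ and $\int_{x_n-1}^{x_n}|f|^p v \leq Me^w 2^{-n}$, so summing over $n$ shows $f\in X$ (and $f\neq 0$). Finally, for $t = x_n-1$ the function $(T_{x_n-1}f)\chi_{{}_{[0,1]}}$ is that same bump transported onto $[0,1]$, so its norm is bounded below by a positive constant independent of $n$ (namely $(\inf_{x\in[0,1]}v(x))^{1/p}$, respectively $v(\tfrac12)$, both positive by admissibility and positivity of $v$). Since $x_n-1\to\infty$, this shows that $\norm{(T_tf)\chi_{{}_{[0,1]}}}$ does not converge to $0$ as $t\to\infty$, which is exactly $(7)$ with $a = 1$.

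The main obstacle is the construction in the second paragraph, and in particular the observation that any point at which $v$ is small is preceded by a whole interval on which $v$ is small, so that a bump supported on that interval is inexpensive in the weighted norm; once that is in hand, the rest reduces to elementary tail estimates together with the cited equivalence with $\liminf_{x\to\infty}v(x) = 0$. One subtlety to flag: since the $f$ constructed above vanishes between its bumps, the limit $\lim_{t\to\infty}\norm{(T_tf)\chi_{{}_{[0,a]}}}$ does not actually exist, so $(7)$ must be read as the assertion that $\norm{(T_tf)\chi_{{}_{[0,a]}}}$ does not converge to $0$ (equivalently $\limsup_{t\to\infty}\norm{(T_tf)\chi_{{}_{[0,a]}}} > 0$).
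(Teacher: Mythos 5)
Your proof is correct, and it differs from the paper's in one of the two directions. The direction ``$(7)$ fails when $\liminf_{x\to\infty}v(x)>0$'' is essentially the paper's argument (same contrapositive, same estimate $\norm{(T_tf)\chi_{{}_{[0,a]}}}\leq \text{const}\cdot$ tail of $f$, with the supremum version for $C_{0,v}$). For the converse, however, the paper does not construct a witness from $\liminf_{x\to\infty}v(x)=0$; instead it invokes the Yin--Wei equivalence of hypercyclicity with the existence of a nonzero recurrent point $f$, chooses $a$ with $\norm{f\chi_{{}_{[0,a]}}}>\tfrac12\norm{f}$, and uses that restriction to $[0,a]$ is nonexpansive to conclude $\norm{(T_{t_k}f)\chi_{{}_{[0,a]}}}\geq \tfrac12\norm{f}-\tfrac1k$ along the recurrence times $t_k\to\infty$ --- a short, soft argument. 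You instead build an explicit $f$ (bumps of height $1$ on $[x_n-1,x_n]$ where $v(x_n)<2^{-n}$), using admissibility to spread the smallness of $v(x_n)$ over the whole unit interval to its left; this is exactly the technique the paper deploys later in the proof of $(1)\Rightarrow(2)$ of its main theorem, transplanted here. The trade-off: the paper's route is shorter but leans on two external equivalences (recurrent points and the Desch--Schappacher--Webb criterion), whereas yours needs only the single equivalence ``hypercyclic $\Leftrightarrow\liminf_{x\to\infty}v(x)=0$'' and produces a concrete function witnessing $(7)$. Your closing remark that the condition must be read as ``$\norm{(T_tf)\chi_{{}_{[0,a]}}}$ does not converge to $0$'' matches the paper's own reading, and your bump construction (spacing $x_{n+1}>x_n+1$, tents in the $C_{0,v}$ case, lower bounds via $\inf_{[0,1]}v>0$, resp.\ $v(\tfrac12)>0$) checks out.
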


\begin{proof}
Let us denote by $(*)$ the statement that there is some $f \in X$ and some $a > 0$ such that $\lim_{t \to \infty} \norm{(T_tf) \chi_{{}_{[0,a]}}} \neq 0$.

By the results stated in the previous section, it suffices to show that the existence of a nonzero recurrent point for $\T$ implies $(*)$, and that $(*)$ implies $\liminf_{x \to \infty} v(x) = 0$.

Let us show first that $(*)$ implies $\liminf_{x \to \infty} v(x) = 0$ by proving the contrapositive. Suppose $\liminf_{x \to \infty} v(x) \neq 0$. As $v$ is strictly positive, this means there exists $c>0$ such that $v(x)>c$ for sufficiently large $x$. Let $f \in X$ and for every $a > 0$, let $B_a = \sup \set{v(x)}{x \in [0,a]}$. Note that $B_a$ is well-defined (i.e., $\set{v(x)}{x \in [0,a]}$ is bounded above) because $v$ is admissible, which implies $v(x) \leq Me^{wa}v(a)$ for all $x \in [0,a]$. We consider two cases, according to whether $X = L^p_v(\R_+)$ or $X = C_{0,v}(\R_+)$.

For the first case, suppose $X = L^p_v(\R_+)$. Then
\begin{align*}
\norm{ (T_tf)\chi_{{}_{[0,a]}} }^p
& = \int_0^a |f(x+t)|^p \, v(x) \,dx \\
& = \int_0^a |f(x+t)|^p \, v(x+t) \frac{v(x)}{v(x+t)} \,dx \\
& \leq \frac{B_a}{c} \int_0^a |f(x+t)|^p \, v(x+t) \,dx \\
& \leq \frac{B_a}{c} \int_t^{t+a} |f(x)|^p \, v(x) \,dx
\end{align*}
and this goes to 0 as $t \to \infty$.

The second case is similar. Suppose $X=C_{0,v}(\R_+)$. Then
\begin{align*}
\norm{ (T_tf)\chi_{{}_{[0,a]}} }_\infty
& \,=\, \sup \set{ f(x+t)v(x) }{ x\in[0,a] } \\
& \,=\, \sup \set{ f(x+t)v(x+t)\frac{v(x)}{v(x+t)} }{ x \in [0,a] } \\
& \leq\, \frac{B_a}{c} \sup \set{ f(x+t)v(x+t) }{ x \in [0,a] } \\
& \leq\, \frac{B_a}{c} \sup \set{ f(x)v(x) }{ x \in [t,t+a] }
\end{align*}
which goes to 0 as $t \to \infty$. Note that this is really the same proof as in the first case, except that we must reinterpret an integral sign as a supremum.
In either case, $(*)$ implies $\liminf_{x \to \infty} v(x) = 0$. 

Next we show that the existence of a nonzero recurrent point for $\T$ implies $(*)$. The proofs for $X = L^p_v(\R_+)$ and $X = C_{0,v}(\R_+)$ are once again very similar. (Once again, the difference amounts to reinterpreting an integral sign as a supremum.) So we give the proof only for $X = L^p_v(\R_+)$.

Suppose that $f \in X \setmins \{0\}$ is recurrent. Then we may find an increasing sequence $t_1 < t_2 < t_3 < \dots $ of real numbers, with $\lim_{k \to \infty}t_k = \infty$, such that $T_{t_k}f \in B(f,\nicefrac{1}{k})$ for all $k \in \N$. 

As $\norm{f} = \lim_{a \to \infty} \norm{f \chi_{{}_{[0,a]}}}$, we may choose some $a > 0$ such that 
$$\norm{f \chi_{{}_{[0,a]}}} \,>\, \frac{1}{2} \norm{f}.$$

Note that for any $g,h \in X$, the distance from $g\chi_{{}_{[0,a]}}$ to $h\chi_{{}_{[0,a]}}$ is bounded by the distance from $g$ to $h$: 
\begin{align*}
\norm{g\chi_{{}_{[0,a]}}-h\chi_{{}_{[0,a]}}} &\,=\, \left( \int_0^a |g(x)-h(x)|^p \,dx \right)^{\!\nicefrac{1}{p}} \\
&\,\leq\, \left( \int_0^\infty |g(x)-h(x)|^p \,dx \right)^{\!\nicefrac{1}{p}} \,=\, \norm{g-h}.
\end{align*}
In particular, $(T_{t_k}f)\chi_{{}_{[0,a]}} \in B(f\chi_{{}_{[0,a]}},\nicefrac{1}{k})$ for all $k \in \N$.
By our choice of $a$, this implies
$$\norm{(T_{t_k}f)\chi_{{}_{[0,a]}}} \,\geq\, \norm{f\chi_{{}_{[0,a]}}} - \frac{1}{k} \,>\, \frac{1}{2}\norm{f} - \frac{1}{k}.$$
As $\norm{f} > 0$, this shows, as claimed, that $\norm{(T_tf)\chi_{{}_{[0,a]}}}$ does not converge to $0$ as $t \to \infty$.
\end{proof}

The first half of this proof works essentially because, when the weight function $v(x)$ is restricted to a bounded interval $[0,a]$, then $\sup_{x,y \in [0,a]}\frac{v(x)}{v(y)}$ is well-defined. Our condition on $v$ in the following theorem simply allows us to apply the same idea on unbounded intervals.

\begin{theorem}\label{thm:main}
Let $X$ be one of the Banach spaces $L^p_v(\R_+)$ or $C_{0,v}(\R_+)$, where $v$ is an admissible weight function, and let $\T = \set{T_t}{t \in \R_+}$ be the semigroup of left translation operators on $X$. The following are equivalent:
\begin{enumerate}
\item $\sup \!\set{ \frac{v(x)}{v(y)} }{ y \geq x } = \infty$.
\item For some $f \in X$, $\lim_{t \to \infty} T_tf \,\neq\, 0$.
\item $\T$ is not uniformly bounded.
\item $\T$ is not uniformly equicontinuous.
\item $\T$ is not equicontinuous.
\item $\T$ has nonzero entropy.
\item $\T$ has infinite entropy.
\end{enumerate}
\end{theorem}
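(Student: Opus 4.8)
The plan is to establish the cycle of implications $(1) \Rightarrow (7) \Rightarrow (6) \Rightarrow (5)$, then close the loop with $(5) \Rightarrow (4) \Rightarrow (3) \Rightarrow (1)$ and handle $(2)$ separately by showing $(3) \Leftrightarrow (2)$ or by inserting $(2)$ into the cycle between $(1)$ and $(3)$. Several of these arrows are soft. The implications $(7) \Rightarrow (6)$ and $(4) \Rightarrow (5)$ are trivial from the definitions. The implication $(5) \Rightarrow (4)$ should follow because the translation semigroup is a family of \emph{linear} operators: for linear maps, continuity at a single point (here $0$) is equivalent to boundedness, and equicontinuity of a family of linear maps at one point is equivalent to uniform boundedness, which in turn gives uniform equicontinuity; so in fact $(3)$, $(4)$, $(5)$ are all equivalent by standard linear-operator arguments, and I would dispatch $(3) \Leftrightarrow (4) \Leftrightarrow (5)$ in one short paragraph. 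Similarly $(6) \Rightarrow (5)$ is the contrapositive of the easy fact that an equicontinuous (indeed uniformly equicontinuous) semigroup has zero entropy: if $\T$ is uniformly equicontinuous, then for each $\e$ there is $\dlt$ so that $d(f,g) < \dlt$ forces $d(T_u f, T_u g) < \e$ for all $u$, hence any compact $K$ can be covered by finitely many $\dlt$-balls and $s_{t,\e}(\T,K)$ is bounded independently of $t$, giving $h(\T,K) = 0$.

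The two substantive implications are $(3) \Rightarrow (1)$ (equivalently $(1) \Rightarrow (3)$, whichever direction reads more cleanly — I would prove $\neg(1) \Rightarrow \neg(3)$, i.e., if the supremum is finite then $\T$ is uniformly bounded) and $(1) \Rightarrow (7)$. For the first: if $\sup\set{v(x)/v(y)}{y \geq x} = C < \infty$, then for $X = L^p_v$ one computes $\norm{T_t f}^p = \int_0^\infty |f(x+t)|^p v(x)\,dx = \int_0^\infty |f(x+t)|^p v(x+t)\frac{v(x)}{v(x+t)}\,dx \leq C \int_t^\infty |f(x)|^p v(x)\,dx \leq C\norm{f}^p$, so $\T$ is uniformly bounded by $C^{1/p}$; the $C_{0,v}$ case is the same with the integral replaced by a supremum. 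This mirrors exactly the first half of the proof of Theorem~\ref{thm:bonus}, as the remark between the two theorems advertises. Conversely, uniform boundedness with constant $B$ forces $\norm{T_t f} \leq B \norm{f}$ for all $f$; testing on functions supported near a point where $v(x)/v(y)$ is large (approximate indicators, suitably normalized) shows $v(x)/v(y) \leq B^p$ for all $y \geq x$, so the supremum is finite. Plugging in $f$ approximating $\chi_{[x, x+\eta]}$ and translating by $t = y - x$ should do it after a short limiting argument as $\eta \to 0$; for $C_{0,v}$ one uses continuous bump functions instead.

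The main obstacle — and the heart of the theorem, since it is the only place \emph{infinite} entropy (rather than merely positive entropy) is extracted — is $(1) \Rightarrow (7)$. Here I would exploit the linearity of the $T_t$'s: if a single orbit exhibits some expansion, then scaling that orbit by arbitrarily large constants produces arbitrarily large separated sets, which is precisely the mechanism by which linear dynamics forces entropy to be $0$ or $\infty$ with nothing in between. Concretely, fix $\e_0$ and, using $(1)$, choose for each $n$ a point $x_n$ and $y_n \geq x_n$ with $v(x_n)/v(y_n) > n$; build a bump function $f_n$ concentrated just to the right of $y_n$, normalized so $\norm{f_n}$ is moderate but $\norm{T_{y_n - x_n} f_n}$ (which sees the weight $v(x_n)$ rather than $v(y_n)$) is large — of order $n^{1/p}$. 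One then assembles, on a compact set $K$ (e.g. a suitable Hilbert cube of functions with controlled supports and decay, or scalar multiples $\lambda f_n$ for $\lambda$ in a bounded interval), a $(t,\e_0)$-separated family whose cardinality grows exponentially in $t$ with rate tending to $\infty$ as $n \to \infty$; the key point is that once the orbit of $f_n$ travels a distance $\gg 1$ in norm, we can fit $\sim e^{cn\cdot t}$ distinct scalar multiples into the tube around it that remain pairwise $\e_0$-separated at some time in $[0,t]$. Taking the supremum over $n$ yields $h(\T) = \infty$. The delicate bookkeeping will be (a) arranging that the relevant $K$ is genuinely compact in $X$ — this needs uniform control on the tails $\int_a^\infty |f(x)|^p v(x)\,dx$ and equicontinuity-type conditions for the $C_{0,v}$ case — and (b) making the separation estimate uniform in $t$ so the $\limsup_{t\to\infty}\frac1t\log s_{t,\e}$ really captures the growth rate $cn$. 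I expect the cleanest route is to first prove $(1) \Rightarrow (6)$ by a direct two-point argument (one nonzero orbit that fails to contract, scaled, already gives positive entropy via a standard binary-coding trick), and then upgrade $(6)$ to $(7)$ using the general principle that a linear operator semigroup with positive entropy has infinite entropy — a fact provable by the scaling argument above applied to whatever compact $K$ witnesses positivity, replacing $K$ by $\bigcup_{k} k\cdot K$ or by a product of scaled copies.
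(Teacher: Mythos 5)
Your outer cycle and all the soft arrows are essentially the paper's: $(5)\Rightarrow(4)\Rightarrow(3)$ via linearity/Banach--Steinhaus, $(6)\Rightarrow(5)$ by noting an equicontinuous family is uniformly equicontinuous on each compact set so separated sets have bounded size, and $\neg(1)\Rightarrow\neg(3)$ by the weight-ratio computation (the paper proves exactly this chain, plus $(1)\Leftrightarrow(2)$ directly). Two bookkeeping points: the trivial arrow is $(5)\Rightarrow(4)$, not $(4)\Rightarrow(5)$ (harmless, since you then prove $(3)\Leftrightarrow(4)\Leftrightarrow(5)$ by linearity anyway), and condition $(2)$ is announced but never actually wired into the argument; this is fixable, since your own estimate $\norm{T_tf}^p\le C\int_t^\infty|f(x)|^pv(x)\,dx$ already gives $\neg(1)\Rightarrow\neg(2)$, and $\neg(2)\Rightarrow\neg(3)$ follows from Banach--Steinhaus (orbits tending to $0$ are bounded), whereas the paper instead proves $(1)\Rightarrow(2)$ by an explicit bump construction.

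The genuine gap is $(1)\Rightarrow(7)$, which is the heart of the theorem, and the mechanism you propose for it does not work. First, admissibility caps the operator norms: $\norm{T_t}\le (Me^{wt})^{1/p}$ on $L^p_v$ (and $\le Me^{wt}$ on $C_{0,v}$), so along a segment $\set{\lambda f}{\lambda\in[0,1]}$ the number of $(t,\e)$-separated points is at most roughly $\sup_{u\le t}\norm{T_uf}/\e\le Ce^{wt/p}\norm{f}/\e$; scalar multiples of a single orbit can therefore contribute growth rate at most $\nicefrac{w}{p}$, never a rate tending to infinity, so "fit $\sim e^{cnt}$ multiples into the tube'' is impossible. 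Second, the proposed upgrade of $(6)$ to $(7)$ by rescaling a compact witness fails: by linearity a set is $(t,\e)$-separated in $kK$ iff its rescaling is $(t,\nicefrac{\e}{k})$-separated in $K$, so $h(\T,kK)=h(\T,K)$ (the $\e\to0$ limit absorbs the scaling), finite unions only give the maximum, and $K+K$ is a factor of the product system so $h(\T,K+K)\le 2h(\T,K)$; moreover "linear with positive entropy implies infinite entropy'' is not a citable general principle --- it is false for linear maps on $\R^n$, where Bowen's entropy is $\sum_i\log^+|\lambda_i|$, and for these semigroups it is precisely what the theorem (answering Yin and Wei) asserts. The paper's construction supplies the missing idea: inside a single window $[z_n-\g,z_n]$ of fixed length it packs $a_n$ bumps on disjoint subintervals of width $\nicefrac{\g}{a_n}$ with height $\left(\nicefrac{a_n}{v(z_n)2^n}\right)^{\nicefrac{1}{p}}$, so that the factor $a_n$ keeps the time-$t_n$ separation bounded below by the constant $\left(\nicefrac{\g}{M}\right)^{\nicefrac{1}{p}}$ independently of $n$, while the norms remain summable, making $K=\set{f_\phi}{\phi\in\prod_n\{1,\dots,a_n\}}$ compact; since $a_n$ may be chosen with $\frac{1}{t_n}\log a_n\to\infty$, this yields $h(\T,K)=\infty$. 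The exponential cardinality must come from this kind of spatial packing in infinitely many independent directions, not from scaling along one expanding orbit, and that is exactly the step your sketch lacks.
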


\begin{proof}
We prove that $(1)$ and $(2)$ are equivalent, and then we show that $(7) \Rightarrow (6) \Rightarrow (5) \Rightarrow (4) \Rightarrow (3) \Rightarrow (1) \Rightarrow (7)$. Many of these implications are proved for $X = L^p_v(\R_+)$ and for $C_{0,v}(\R_+)$ simultaneously. Where it is necessary to distinguish between $L^p_v(\R_+)$ and $C_{0,v}(\R_+)$, we treat the case $X = L^p_v(\R_+)$ first.

%%%%%%%%%%%%

To show $(2) \Rightarrow (1)$, we prove the contrapositive. Let $X = L^p_v(\R_+)$, and suppose $(1)$ does not hold, which means that there is some finite $B>0$ with
$\textstyle \sup \!\set{ \frac{v(x)}{v(y)} }{ y \geq x } = B.$
Let $f \in X$ and let $\e > 0$. Because 
$$\int_0^\infty |f(x)|^p v(x) \,dx \,=\, \norm{f}^p \,<\, \infty,$$
there is some $t_0 \geq 0$ such that
$$\int_{t_0}^\infty |f(x)|^p v(x) \,dx \,<\, \frac{\e^p}{B}.$$
This implies that for every $t \geq t_0$,
\begin{align*}
\norm{T_tf}^p &\,=\, \int_0^\infty |f(x+t)|^p v(x) \,dx \\
&\,=\, \int_0^\infty |f(x+t)|^p v(x+t)\frac{v(x)}{v(x+t)} \,dx \\
&\,\leq\, B\int_0^\infty |f(x+t)|^p v(x+t) \,dx \\
&\,=\, B\int_{t}^\infty |f(x)|^p v(x) \,dx \\
&\,\leq\, B\int_{t_0}^\infty |f(x)|^p v(x) \,dx \\
&\,<\, \e^p.
\end{align*}
Thus $\norm{T_tf} < \e$ for all $t \geq t_0$. As $f \in X$ and $\e > 0$ were arbitrary, this shows that $(2)$ does not hold.

For the second case, let $X = C_{0,v}(\R_+)$, and again suppose $(1)$ does not hold. This means there is some finite $B > 0$ with
$\textstyle \sup \!\set{ \frac{v(x)}{v(y)} }{ y \geq x } = B.$
Let $f \in X$ and let $\e > 0$. Because 
$\lim_{x \to \infty} |f(x)| v(x) = 0,$
there is some $t_0 \geq 0$ such that
$$\sup \set{|f(x)| v(x) }{ x \geq t_0 } \,<\, \frac{\e}{B}.$$
This implies that for every $t \geq t_0$,
\begin{align*}
\norm{T_tf} &\,=\, \sup \set{ |f(x+t)| v(x) }{ x \geq 0 } \\
&\,=\, \sup \set{ |f(x+t)| v(x+t) \frac{v(x)}{v(x+t)} }{ x \geq 0 } \\
&\,\leq\, B \sup \set{ |f(x+t)| v(x+t) }{ x \geq 0 } \\
&\,=\, B \sup \set{ |f(x)| v(x) }{ x \geq t } \\
&\,\leq\, B \sup \set{ |f(x)| v(x) }{ x \geq t_0 } \\
&\,<\, \e.
\end{align*}
Thus $\norm{T_tf} < \e$ for all $t \geq t_0$. As $f \in X$ and $\e > 0$ were arbitrary, this shows that $(2)$ does not hold. This completes the proof that $(2) \Rightarrow (1)$.

Note that, as in the proof of Theorem~\ref{thm:bonus}, the cases $X = L^p_v(\R_+)$ and $C_{0,v}(\R_+)$ are only superficially different: we merely had to trade our integrals for supremums. For the remaining implications, we will sometimes leave such straightforward modifications to the reader.

%%%%%%%%%%%%

To show $(1) \Rightarrow (2)$, suppose $\sup \!\set{ \frac{v(x)}{v(y)} }{ y \geq x } = \infty$.
(We begin with a construction that is useful for both cases, $X = L^p_v(\R_+)$ and $X = C_{0,v}(\R_+)$.)
Recall that the admissibility condition on $v(x)$ means there exist some $M \geq 1$ and $w \in \R_+$ such that 
$v(x) \leq Me^{wt}v(x+t)$
for all $t \geq 0$. 
%Note that if $w=0$, then $\sup \!\set{ \frac{v(x)}{v(y)} }{ y \geq x } \leq M$, contrary to our assumption that $(1)$ holds.
This implies there is some $\g > 0$ such that, for any $x,x' \in \R_+$ with $x \leq x' \leq x+\g$, $\frac{v(x)}{v(x')} \leq 2M$. (Explicitly, we may take $\g = \frac{1}{w} \ln 2$, noting that $w=0$ is impossible because having $w = 0$ would imply $\sup \!\set{ \frac{v(x)}{v(y)} }{ y \geq x } \leq M$.)

Let us define two sequences of non-negative real numbers, $\seq{y_n}{n \in \N}$ and $\seq{z_n}{n \in \N}$, via recursion such that
\begin{enumerate}
\item[$\circ$] $y_1 < z_1 < y_2 < z_2 < y_3 < z_3 < \dots$,
\item[$\circ$] $z_{n+1} > z_n + \g$ for all $n \in \N$, and
\vspace{1mm}
\item[$\circ$] $\displaystyle \frac{v(y_n)}{v(z_n)} > 2^n$ for all $n \in \N$.
\end{enumerate}

Consider the case $X=L^p_v(\R_+)$. Define a function $f: \R_+ \to \R$ as follows:
$$f(x) \,=\,
\begin{cases}
\left( \nicefrac{1}{v(z_n)2^n} \right)^{\!\nicefrac{1}{p}} &\text{ if } x \in [z_n-\g,z_n] \text{ for some } n \in \N, \\ 
0 &\text{ if not.}
\end{cases}$$
We claim that $f \in L^p_v(\R_+)$ and that $\lim_{t \to \infty} T_tf \neq 0$.

For each $n \in \N$, 
\begin{align*}
\int_{z_n-\g}^{z_n} |f(x)|^pv(x) \,dx &\,=\, \int_{z_n-\g}^{z_n} \frac{1}{v(z_n)2^n}\,v(x) \,dx \\
&\,=\, \int_{z_n-\g}^{z_n} \frac{1}{2^n}\,\frac{v(x)}{v(z_n)} \,dx \\
&\,\leq\, \int_{z_n-\g}^{z_n} \frac{1}{2^n}\,2M \,dx \\
&\,=\, \frac{M\g}{2^{n-1}},
\end{align*}
and this implies
$$\int_{0}^\infty |f(x)|^pv(x) \,dx \,=\, \sum_{n=1}^\infty \int_{z_n-\g}^{z_n} |f(x)|^pv(x) \,dx \,\leq\, \sum_{n=1}^\infty \frac{M\g}{2^{n-1}} \,<\, \infty,$$
so that $f \in L^p_v(\R_+)$ as claimed.

To show $\lim_{t \to \infty} T_tf \neq 0$, set $t_n = z_n-y_n-\g$ for each $n \in \N$. Using the admissibility of $v(x)$, observe that
$$2^n \,<\, \frac{v(y_n)}{v(z_n)} \,\leq\, Me^{w(t_n+\g)}$$
for all $n \in \N$. It follows that $\lim_{n \to \infty}t_n = \infty$. Thus in order to show $\lim_{t \to \infty} T_tf \neq 0$, it suffices to show $\lim_{n \to \infty} T_{t_n}f \neq 0$, and for this it suffices to show $\lim_{n \to \infty} \norm{T_{t_n}f}^p \neq 0$. We have
\begin{align*}
\norm{T_{t_n}f}^p &\,=\, \int_0^\infty |f(x+t_n)|^p v(x) \,dx \\
&\,\geq\, \int_{y_n}^{y_n+\g} |f(x+t_n)|^p v(x) \,dx \\
&\,=\, \int_{y_n}^{y_n+\g} \frac{1}{v(z_n)2^n} \,v(x) \,dx \\
&\,=\, \int_{y_n}^{y_n+\g} \frac{1}{2^n} \,\frac{v(x)}{v(y_n)} \,\frac{v(y_n)}{v(z_n)} \,dx \\
&\,>\, \int_{y_n}^{y_n+\g} \frac{1}{2^n} \,\frac{1}{2M} \,2^n \,dx \\
&\,=\, \frac{\g}{2M} \\
\end{align*}
for every $n \in \N$, so $\lim_{n \to \infty} \norm{T_{t_n}f}^p \neq 0$ as desired. 

Next consider the case $X = C_{0,v}(\R_+)$. The function $f$ defined in the previous case is not continuous, though functions in $C_{0,v}(\R_+)$ must be: to obtain a function suitable for this case, we simply modify the function above to make it continuous.
 
More precisely, define a function $f\colon\R_+\to\R$ as follows. For each $n \in \N$, define $f$ on $[z_n-\g,z_n]$ by letting $f(z_n)=f(z_n-\g)=0$ and $f(z_n-\frac{\g}{2}) = 1/(v(z_n)2^n)$, and then letting $f$ be linear from $z_n-\g$ to $z_n-\frac{\g}{2}$ and from $z_n-\frac{\g}{2}$ to $z_n$. If $x$ is not in $[z_n-\g,z_n]$ for any $n \in \N$, then $f(x)=0$. 

The function $f$ is continuous. (In fact, it is piecewise linear.) We claim that $f\in C_{0,v}(\R_+)$ and that $\lim_{t \to \infty} T_tf \neq 0$.

For each $n \in \N$, and every $x\in[z_n-\g,z_n]$, $|f(x)|\leq 1/(v(z_n)2^n)$, so
\begin{align*}
\sup \set{ |f(x)|v(x) }{x \in [z_n-\g,z_n]} &\,\leq\, \sup \set{ \frac{1}{v(z_n)2^n} \, v(x) }{x \in [z_n-\g,z_n]} \\
&\,=\, \sup \set{ \frac{1}{2^n} \, \frac{v(x)}{v(z_n)} }{x \in [z_n-\g,z_n]} \\
&\,\leq\, \sup \set{ \frac{1}{2^n} \, 2M }{x \in [z_n-\g,z_n]} \\
&\,=\, \frac{M}{2^{n-1}},
\end{align*}
as before, and this implies
\begin{align*}
\lim_{x \to \infty} |f(x)| v(x) &\,=\, \lim_{n \to \infty} \left( \vphantom{f^{f^f}}\hspace{-.2mm} \sup \set{ |f(x)|v(x) }{x \in [z_n-\g,z_n]} \right) \\
&\,\leq\, \lim_{n \to \infty} \frac{M}{2^{n-1}} \,=\, 0,
\end{align*}
so that $f \in C_{0,v}(\R_+)$ as claimed.

We omit the proof that $\lim_{t\to\infty}T_tf\neq 0$, as it is essentially the same as the previous case (the primary difference being that we must take supremums instead of taking integrals).

This completes the proof of $(1) \Leftrightarrow (2)$.
%%%%%%%%%%%%

That $(7) \Rightarrow (6)$ is obvious. 

%%%%%%%%%%%%

To show $(6) \Rightarrow (5)$, we prove the contrapositive. Suppose that $(5)$ fails: i.e., suppose that $\T$ is an equicontinuous family of functions. Let $\e > 0$, and let $K \sub X$ be compact. Note that the restriction of $\T$ to $K$ is uniformly equicontinuous. (The proof of this is essentially identical to the well-known proof that every continuous function defined on a compact metric space is uniformly continuous.) Pick $\dlt > 0$ such that for any $f,g \in K$, if $\norm{f-g}_{{}_X} < \dlt$ then $\norm{T_tf - T_tg}_{{}_X} < \e$ for all $t \in \R_+$. There is some $N \in \N$ such that $K$ can be covered by $N$ open sets of diameter $<\!\dlt$. By our choice of $\dlt$, this means that any $(t,\e)$-separated subset of $K$ has size at most $N$. As $\e > 0$ was arbitrary, it follows that $h(\T,K) = 0$. As $K$ was an arbitrary compact subset of $X$, it follows that $h(\T) = 0$.

%%%%%%%%%%%%

That $(5) \Rightarrow (4)$ is obvious. 

%%%%%%%%%%%%

To show $(4) \Rightarrow (3)$, we prove the contrapositive (which is just a special case of the Banach-Steinhaus Theorem.) Suppose that $(3)$ fails. Then there is some $B \geq 0$ such that $\norm{T_tf}_{{}_X} \leq B\norm{f}_{{}_X}$ for all $f \in X$ and all $t \in \R_+$. If $f,g \in X$ and $t \in \R_+$, then,
$$\norm{T_tf - T_tg}_{{}_X} \,=\, \norm{T_t(f-g)}_{{}_X} \,\leq\, B\norm{f-g}_{{}_X}.$$
It follows that for any given $\e > 0$, if $\norm{f-g}_{{}_X} < \nicefrac{\e}{B}$, then $\norm{T_tf - T_tg}_{{}_X} < \e$ for all $t \in \R_+$. Hence $\T$ is uniformly equicontinuous.

%%%%%%%%%%%%

To show $(3) \Rightarrow (1)$, we again prove the contrapositive. Suppose $(1)$ does not hold, which means there is some $B > 0$ with
$\textstyle \sup \!\set{ \frac{v(x)}{v(y)} }{ y \geq x } = B.$
For the first case, suppose $X = L^p_v(\R_+)$.
If $f \in X$ and $t \in \R_+$, then,
\begin{align*}
\norm{T_tf}^p &\,=\, \int_0^\infty |f(x+t)|^p v(x) \,dx \\
&\,=\, \int_0^\infty |f(x+t)|^p v(x+t)\frac{v(x)}{v(x+t)} \,dx \\
&\,\leq\, B\int_0^\infty |f(x+t)|^p v(x+t) \,dx \\
&\,=\, B\int_{t}^\infty |f(x)|^p v(x) \,dx \\
&\,\leq\, B\int_{0}^\infty |f(x)|^p v(x) \,dx \\
&\,=\, B\norm{f}^p.
\end{align*}
It follows that $\norm{T_tf} \leq B^{\nicefrac{1}{p}}\norm{f}$. Hence $\T$ is uniformly bounded.
The second case, $X = C_{0,v}(\R_+)$, is proved similarly, by replacing integrals with supremums.

%%%%%%%%%%%%

It remains to show $(1) \Rightarrow (7)$. The proof begins in a manner similar to the proof of $(1) \Rightarrow (2)$ above. Suppose $\sup \!\set{ \frac{v(x)}{v(y)} }{ y \geq x } = \infty$.
Recall that the admissibility condition on $v(x)$ means there exist some $M \geq 1$ and $w \in \R_+$ such that 
$v(x) \leq Me^{wt}v(x+t)$
for all $t \geq 0$. 
This implies there is some $\g > 0$ such that, for any $x \in \R_+$ and any $x' \in [x,x+\g]$, $\frac{v(x)}{v(x')} \leq 2M$.

Define two sequences $\seq{y_n}{n \in \N}$ and $\seq{z_n}{n \in \N}$ via recursion such that
\begin{enumerate}
\item[$\circ$] $y_1 < z_1 < y_2 < z_2 < y_3 < z_3 < \dots$,
\item[$\circ$] $z_n < z_{n+1} - \g$ for all $n \in \N$, and
\vspace{1mm}
\item[$\circ$] $\displaystyle \frac{v(y_n)}{v(z_n)} > 2^n$ for all $n \in \N$.
\end{enumerate}
For each $n \in \N$, let $J_n = [z_n-\g,z_n]$ and define $t_n = z_n-y_n-\e$.
Fix a sequence $\seq{a_n}{n \in \N}$ of positive integers such that 
$$\limsup_{n \to \infty} \frac{1}{t_n} \log a_n = \infty.$$

For each $n \in \N$ and each $1 \leq k \leq a_n$, let $J^k_n = \left[ z_n-\frac{k\g}{a_n},z_n-\frac{(k-1)\g}{a_n} \right]$, so that the $J^k_n$ form a division of $J_n$ into exactly $a_n$ adjacent closed intervals of equal width. To prove that $\T$ has infinite entropy, we consider a particular collection of functions that are zero everywhere except for on exactly one of the $J^k_n$ for each $n$.

More specifically, let us consider the set
$\C = \prod_{n = 1}^\infty \{1,2,\dots,a_n\}$ of all functions $\phi: \N \to \N$ such that $1 \leq \phi(n) \leq a_n$ for all $n \in \N$.

Consider the case $X=L^p_v(\R_+)$. For each $\phi \in \C$, define a function $f_\phi: \R_+ \to \R_+$ as follows:
$$f_\phi(x) \,=\,
\begin{cases}
\left( \nicefrac{a_n}{v(z_n)2^n} \right)^{\!\nicefrac{1}{p}} &\text{ if } x \in J^{\phi(n)}_n \text{ for some } n \in \N, \\ 
0 &\text{ if not.}
\end{cases}$$

Let us check first that $f_\phi \in L^p_v(\R_+)$ for every $\phi \in \C$.
For each $n \in \N$ and each $1 \leq k \leq a_n$, 
\begin{align*}
\int_{J_n} |f(x)|^pv(x) \,dx &\,=\, \int_{J^k_n} \frac{a_n}{v(z_n)2^n} \, v(x) \,dx \\
&\,=\, \int_{J^k_n} \frac{a_n}{2^n}\,\frac{v(x)}{v(z_n)} \,dx \\
&\,\leq\, \int_{J^k_n} \frac{a_n}{2^n}\,2M \,dx \\
&\,=\, \frac{\g}{a_n} \, \frac{a_nM}{2^{n-1}} \\
&\,=\, \frac{M\g}{2^{n-1}}.
\end{align*}
Hence
$$\int_0^\infty |f(x)|^p v(x) \,dx  \,=\, \sum_{n=1}^\infty \int_{J_n}|f(x)|^pv(x) \,dx \,\leq\, \sum_{n=1}^\infty \frac{M\g}{2^{n-1}} < \infty,$$
which means $f_\phi \in L^p_v(\R_+)$.

Let $K = \set{f_\phi}{\phi \in \C}$. We claim that $K$ is a compact subset of $L^p_v(\R_+)$, and that $h(\T,K) = \infty$. This suffices to prove $(7)$.

Note that $\C$ may be viewed as a topological space, where each set of the form $\{1,2,\dots,a_n\}$ is given the discrete topology, and the topology on $\C$ is the standard product topology. With this topology, $\C$ is compact. (In fact, it is homeomorphic to the Cantor space.) Thus, to prove that $K = \set{f_\phi}{\phi \in \C}$ is a compact subset of $X$, it suffices to show that the mapping $\phi \mapsto f_\phi$ is continuous.

To show that the mapping $\phi \mapsto f_\phi$ is continuous, let $f_\phi$ be an arbitrary point in the image of this mapping, and let $\e > 0$. 
Suppose $\psi \in \C$ and $N \in \N$, and suppose $\phi \rest \{1,2,\dots,N\} = \psi \rest \{1,2,\dots,N\}$. Then $f_\phi$ and $f_\psi$ agree on $[0,\min J_{N+1})$. This implies
\begin{align*}
\norm{f_\phi-f_\psi} &\,=\, \norm{f_\phi\chi_{{}_{[\min J_{N+1},\infty)}}-f_\psi\chi_{{}_{[\min J_{N+1},\infty)}}} \\
&\,\leq\, \norm{f_\phi\chi_{{}_{[\min J_{N+1},\infty)}}}+\norm{f_\psi\chi_{{}_{[\min J_{N+1},\infty)}}} \\
&\,=\, \left( \sum_{n = N+1}^\infty \int_{J_n} |f_\phi(x)|^p v(x) \,dx \right)^{\!\nicefrac{1}{p}} \\
&\qquad\qquad\qquad\qquad+ \left( \sum_{n = N+1}^\infty \int_{J_n} |f_\psi(x)|^p v(x) \,dx \right)^{\!\nicefrac{1}{p}} \\
&\,\leq\, \left( \sum_{n = N+1}^\infty \frac{M\g}{2^{n-1}} \right)^{\!\nicefrac{1}{p}} + \left( \sum_{n = N+1}^\infty \frac{M\g}{2^{n-1}} \right)^{\!\nicefrac{1}{p}} \\
&\,=\, 2\left( \frac{M\g}{2^N} \right)^{\!\nicefrac{1}{p}}.
\end{align*}
In particular, if $N$ is sufficiently large then 
$$\phi \rest \{1,2,\dots,N\} = \psi \rest \{1,2,\dots,N\} \quad \text{implies} \quad \norm{f_\phi-f_\psi} < \e.$$ 
But $U = \set{\psi \in \C}{\phi \rest \{1,2,\dots,N\} = \psi \rest \{1,2,\dots,N\}}$ is a basic open subset of $\C$; so we have found an open $U \sub \C$ containing $\phi$ such that every member of $U$ maps within $\e$ of $f_\phi$ in $X$. This shows that the mapping $\phi \mapsto f_\phi$ is continuous, as claimed, so $K$ is compact.

It remains to show that $h(\T,K) = \infty$. To this end, we show first that for all sufficiently small $\e$, for any $n \in \N$ there is a $(t_n,\e)$-separated subset of $K$ of size $a_n$.

Fix $\e$ with $0 < \e < \left( \frac{\g}{M} \right)^{\!\nicefrac{1}{p}}$, and let $n \in \N$. For each $i$ with $1 \leq i \leq a_n$, fix some $\phi_i \in \C$ such that $\phi(n) = i$, and let $S = \set{f_{\phi_i}}{1 \leq i \leq a_n}$. We claim that $S$ is a $(t_n,\e)$-separated subset of $K$. To see this, let $1 \leq i,j \leq a_n$ with $i \neq j$, and observe that
\begin{align*}
&\norm{T_{t_n}f_{\phi_i} - T_{t_n}f_{\phi_j}}^p \\
&\qquad\,=\, \int_0^\infty |f_{\phi_i}(x+t_n) - f_{\phi_j}(x+t_n)|^p v(x) \,dx \\
&\qquad\,\geq\, \int_{y_n}^{y_n+\g} |f_{\phi_i}(x+t_n) - f_{\phi_j}(x+t_n)|^p v(x) \,dx \\
&\qquad\,=\, \int_{y_n-\frac{i\g}{a_n}}^{y_n-\frac{(i-1)\g}{a_n}} |f_{\phi_i}(x+t_n)|^p \,v(x) \,dx \\
& \qquad \qquad \qquad + \int_{y_n-\frac{j\g}{a_n}}^{y_n-\frac{(j-1)\g}{a_n}} |f_{\phi_j}(x+t_n)|^p \,v(x) \,dx \\
&\qquad\,=\, \int_{y_n-\frac{i\g}{a_n}}^{y_n-\frac{(i-1)\g}{a_n}} \frac{a_n}{v(z_n)2^n} \,v(x) \,dx + \int_{y_n-\frac{j\g}{a_n}}^{y_n-\frac{(j-1)\g}{a_n}} \frac{a_n}{v(z_n)2^n} \,v(x) \,dx \\
&\qquad\,=\, \int_{y_n-\frac{i\g}{a_n}}^{y_n-\frac{(i-1)\g}{a_n}} \frac{a_n}{2^{n}} \,\frac{v(x)}{v(y_n)} \,\frac{v(y_n)}{v(z_n)} \,dx + \int_{y_n-\frac{j\g}{a_n}}^{y_n-\frac{(j-1)\g}{a_n}} \frac{a_n}{2^{n}} \,\frac{v(x)}{v(y_n)} \,\frac{v(y_n)}{v(z_n)} \,dx \\
&\qquad\,=\, \int_{y_n-\frac{i\g}{a_n}}^{y_n-\frac{(i-1)\g}{a_n}} \frac{a_n}{2^{n}} \,\frac{1}{2M} \,2^n \,dx + \int_{y_n-\frac{j\g}{a_n}}^{y_n-\frac{(j-1)\g}{a_n}} \frac{a_n}{2^{n}} \,\frac{1}{2M} \,2^n \,dx \\
&\qquad\,=\, \frac{\g}{a_n}\,\frac{a_n\g}{2M}+\frac{\g}{a_n}\,\frac{a_n\g}{2M} \\
&\qquad\,=\, \frac{\g}{M}, \\
\end{align*}
from which it follows that $S$ is a $(t_n,\e)$-separated subset of $K$. 

Recalling that $s_{t_n,\e}(\T,K)$ denotes the largest size of a $(t_n,\e)$-separated subset of $K$, we have $s_{t_n,\e}(\T,K) \geq |S| = a_n$. Hence
$$\limsup_{n \to \infty} \, \frac{1}{t_n} \log s_{t_n,\e}(\T,K) \,\geq\, \limsup_{n \to \infty} \, \frac{1}{t_n} a_n \,=\, \infty$$
(by our choice of the $a_n$). As in the proof of $(1) \Rightarrow (2)$,
observe that
$$2^n \,<\, \frac{v(y_n)}{v(z_n)} \,\leq\, Me^{w(t_n+\g)}$$
for all $n \in \N$. It follows that $\lim_{n \to \infty}t_n = \infty$. From this and our observation above that $\limsup_{n \to \infty} \, \frac{1}{t_n} \log s_{t_n,\e}(\T,K) \,=\, \infty$, we get
$$\limsup_{t \to \infty} \, \frac{1}{t} \log s_{t,\e}(\T,K) \,=\, \infty.$$
As this holds for all sufficiently small values of $\e$ (any $\e$ with $0 < \e < \left( \frac{\g}{M} \right)^{\!\nicefrac{1}{p}}$),
$$h(\T,K) \,=\, \lim_{\e \to 0} \, \limsup_{t \to \infty} \, \frac{1}{t} \log s_{t,\e}(\T,K) \,=\, \infty.$$
It follows that $h(\T) = \infty$, as claimed.

The case $X=C_{0,v}(\R_+)$ is very similar. We must simply be careful to define the functions $f_\phi$ so that they are continuous. This can be done as follows: for each $\phi\in\C$, we define $f_\phi$ so that on each interval $J^{\phi(n)}_n$, $f$ maps the endpoints to zero, sends the midpoint to $a_n/(v(z_n)2^n)$, and is linear in between. Then for $x$ not in any $J^{\phi(n)}_n$, let $f_\phi(x)=0$. The remainder of the proof is essentially the same as for the case $X=L^p_v(\R_+)$ (the main difference, of course, being that we must replace our integrals signs with supremums).
\end{proof}

%%%%%%%%%%%%
\section{The incompleteness of the three-tiered view}
%%%%%%%%%%%%

In this final section, we indicate several ways in which the three-tiered picture of $L^p_v(\R_+)$ does not completely capture the varied possibilities for the dynamics of the translation operators on $L^p_v(\R_+)$ and $C_{0,v}(\R_+)$.

Recall that a function $T$ on a space $X$ is \emph{topologically mixing} if for all nonempty open $U,V \sub X$, $T^n(U) \cap V \neq \0$ for all sufficiently large $n$. This is a strengthening of topological transitivity. In \cite{BBCP}, it was shown that the translation operators $T_t$ on $L^p_v(\R_+)$ or $C_{0,v}(\R_+)$ are all topological mixing if and only if $\lim_{x \to \infty} v(x) = 0$. Of course, this condition on $v(x)$ is strictly weaker than the integrability condition that defines our strongest tier of chaos, but strictly stronger than the condition $\liminf_{x \to \infty} v(x) = 0$ that defines the middle tier. Thus we have a type of chaotic behavior for $L^p_v(\R_+)$ that fits strictly in between the top two of our three tiers of chaos.

Looking at the top tier of chaotic behaviors for $L^p_v(\R_+)$ also highlights a difference between $L^p_v(\R_+)$ and $C_{0,v}(\R_+)$. It is fairly easy to check that $C_{0,v}(\R_+)$ contains a nonzero periodic point if $\lim_{x \to \infty} v(x) = 0$. (Indeed, if $\lim_{x \to \infty} v(x) = 0$ then $C_{0,v}(\R_+)$ contains all constant functions, which are fixed by translation.) Thus $L^p_v(\R_+)$ can exhibit the three distinct tiers of chaotic behavior in our picture, with a fourth possibility (mixing) in between the top two, but for $C_{0,v}(\R_+)$ the situation is different: at least some of the properties listed in the top tier are strictly weaker than the integrability of $v(x)$.

Another notion of chaotic behavior, introduced in \cite{Schweizer&Smital}, is distributional chaos. Let $\mu$ denote Lebesgue measure on $\R_+$. We say that $\T$ has \emph{distributional chaos} if there exists an uncountable set $S \sub X$ such that for every $f,g \in S$ with $f \neq g$, there is some $\delta > 0$ such that
	\[
	\liminf_{t\to\infty}\frac{\mu\left(\{s\in[0,t]\colon d(T_sf,T_sg)<\delta\}\right)}{t}=0
	\]
	(i.e., $f$ and $g$ are often $\dlt$-separated), and for all $\e>0$,
	\[
	\limsup_{t\to\infty}\frac{\mu\left(\{s\in[0,t]\colon d(T_sf,T_sg)<\e\}\right)}{t}=1
	\]
(i.e., $f$ and $g$ are often arbitrarily close). A single pair $f,g$ of points with this property is called a \emph{distributionally scrambled pair}.

Barrachina and Peris show in \cite{Barrachina&Peris} that $\T$ can have distributional chaos without being hypercyclic. In \cite{MGOP}, Mar\'{i}nez-Gim\'{e}nez, Oprocha, and Peris show that the backward shift operator on $\ell^p_v$ (the discrete analogue of $L^p_v(\R_+)$) can be hypercyclic and even topologically mixing, yet fail to have distributional chaos. The example they present could be adapted to show the same holds for the translation semigroup on $L^p_v(\R_+)$.
Thus the notion of distributional chaos is incomparable with our second tier of chaos, in that it neither implies the notions of chaos in that tier nor is implied by them.

Two points $f,g \in X$ form a \emph{Li-Yorke scrambled pair} if
$$\liminf_{t \to \infty} d(T_tf,T_tg) = 0 \quad \text{ and } \quad \limsup_{t \to \infty} d(T_tf,T_tg) > 0.$$
This is a weaker condition on $f$ and $g$ than the one given above; i.e., every distributionally scrambled pair is also Li-Yorke scrambled.

The fundamental observation of Schweizer an Sm\'ital in \cite{Schweizer&Smital} is that if a map $T: [0,1] \to [0,1]$ has a distributionally scrambled pair, then it has nonzero topological entropy.
Using Theorem~\ref{thm:main}, we establish an even stronger result for translations on $L^p_v(\R_+)$ and $C_{0,v}(\R_+)$.

\begin{theorem}
Let $X$ denote one of the Banach spaces $L^p_v(\R_+)$ or $C_{0,v}(\R_+)$, where $v$ is an admissible weight function, and let $\T = \set{T_t}{t \in \R_+}$ denote the semigroup of left translation operators on $X$. 
If there exists a Li-Yorke scrambled pair for $\T$, then $h(\T)=\infty$.
\end{theorem}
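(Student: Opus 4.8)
The plan is to show the contrapositive: if $h(\T) < \infty$, then by Theorem~\ref{thm:main} we must have $h(\T) = 0$, hence (by the equivalence of $(6)$ with $(2)$ and $(5)$ in that theorem) $\T$ is equicontinuous and, moreover, every $f \in X$ satisfies $\lim_{t\to\infty} T_t f = 0$. I will argue that under these two conclusions there can be no Li–Yorke scrambled pair. So suppose, toward a contradiction, that $f, g \in X$ form a Li–Yorke scrambled pair, meaning $\liminf_{t\to\infty} d(T_t f, T_t g) = 0$ while $\limsup_{t\to\infty} d(T_t f, T_t g) > 0$.

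The key observation is that for \emph{linear} operators, $d(T_t f, T_t g) = \norm{T_t f - T_t g} = \norm{T_t(f-g)}$, so the distance between the two orbits is just the norm of the single orbit of $h := f - g$. Since $f \neq g$, we have $h \neq 0$. But condition $(2)$ of Theorem~\ref{thm:main} failing (i.e.\ $h(\T)=0$ forcing $\lim_{t\to\infty} T_t h = 0$ for \emph{all} $h \in X$) gives $\lim_{t\to\infty} \norm{T_t h} = 0$. This directly contradicts $\limsup_{t\to\infty} \norm{T_t(f-g)} > 0$. Hence no Li–Yorke scrambled pair can exist when $h(\T) < \infty$, which is exactly what we wanted.

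Written affirmatively: if $\T$ has a Li–Yorke scrambled pair $f, g$, then setting $h = f - g \neq 0$ we get $\limsup_{t\to\infty} \norm{T_t h} > 0$, so $\lim_{t\to\infty} T_t h \neq 0$. This is condition $(2)$ of Theorem~\ref{thm:main}, which is equivalent to condition $(7)$, namely $h(\T) = \infty$. (We do not even need the $\liminf$ half of the scrambled-pair condition, nor any distinction between $L^p_v(\R_+)$ and $C_{0,v}(\R_+)$ — the argument is identical in both cases since both are Banach spaces and the $T_t$ are linear.)

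I do not anticipate a serious obstacle here: the entire content is the linearity trick $d(T_tf, T_tg) = \norm{T_t(f-g)}$ combined with the already-established equivalence $(2) \Leftrightarrow (7)$ from Theorem~\ref{thm:main}. The only thing to be slightly careful about is confirming that the metric $d$ on $X$ is the (translation-invariant) metric induced by the norm, $d(u,v) = \norm{u-v}$, so that the substitution is valid — but this is the standing convention for these Banach spaces, as used throughout the entropy definition and the preceding proofs. If one wanted to avoid even invoking the full strength of $(2)\Leftrightarrow(7)$, one could note that $h(\T) = 0$ and $(6)\Leftrightarrow(2)$ alone suffice to derive the contradiction, but routing through $(7)$ makes the statement's conclusion $h(\T) = \infty$ immediate.
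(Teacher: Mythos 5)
Your proposal is correct and is essentially the paper's own argument: take $h = f-g$, use the limsup half of the Li--Yorke condition together with linearity to get $\limsup_{t\to\infty}\norm{T_t h} > 0$, i.e.\ condition $(2)$ of Theorem~\ref{thm:main}, and then invoke the equivalence $(2)\Leftrightarrow(7)$ to conclude $h(\T)=\infty$. The contrapositive framing at the start is unnecessary detour, but the affirmative version you give at the end matches the paper's proof exactly.
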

\begin{proof}
Suppose there exist some $f,g \in X$ that form a Li-Yorke scrambled pair for $\T$.
Then letting $h = f-g$, we have
$$\limsup_{t \to \infty} \norm{T_th}_{{}_X} \,=\, \limsup_{t \to \infty} d(T_tf,T_tg) \,>\, 0.$$
which implies that $T_th$ does not converge to $0$ as $t \to \infty$. Thus by Theorem~\ref{thm:main}, $h(\T)=\infty$.
\end{proof}

Finally, let us look at what happens below the lowest tier of chaos included in our picture from Section 2, that is, translations on $L^p_v(\R_+)$ and $C_{0,v}(\R_+)$ where $\sup \set{\frac{v(x)}{v(y)}}{x \leq y} = b$ for some $b > 0$. These dynamical systems have zero entropy, and every point tends to $0$ under iteration. One might be tempted to think they are all helplessly tame, and can exhibit no dynamically interesting behavior. We consider two examples, and show that they can in fact behave rather differently.

Suppose $X$ is a metric space and $T: X \to X$ is a mapping. Given $\e > 0$, a sequence $\seq{f_i}{0 \leq i \leq n}$ of points in $X$ is called an \emph{$\e$-chain} from $f_0$ to $f_n$ if $d(T(f_i),f_{i+1}) < \e$ for every $i < n$. The idea is that an $\e$-chain is a finite piece of the orbit of $f_0$, but computed with a small error at every step, an error of size less than $\e$. The map $T$ is called \emph{chain transitive} if for any $f,g \in X$ and any $\e > 0$, there is an $\e$-chain from $f$ to $g$.

It is fairly easy to check that every transitive dynamical system is also chain transitive. Thus, for $L^p_v(\R_+)$, in our top two tiers of chaos every $T \in \T$ is chain transitive. We show now that chain transitivity may or may not hold in the non-chaotic zone beneath the bottom tier.

\begin{example}
Suppose $v$ is a constant function, $v(x) = c$. Then we claim that every $T \in \T \setmins \{T_0\}$ is chain transitive for $X = L^p_v(\R_+)$. (The example can be modified to show the same for $X = C_{0,v}(\R_+)$, but we leave the details of this to the reader.) Fix $T = T_t$ with $t > 0$. To prove $T$ is chain transitive, we begin by showing that for every $g \in X$ and $\e > 0$, there is an $\e$-chain from $0$ to $g$.
So let $\e > 0$ and let $g \in X$, and fix $n \in \N$ larger than $\nicefrac{\norm{g}}{\e}$. 
For each $i \leq n$, let 
$$g_i(x) \,=\, \frac{i}{n} \, T^{-(n-i)}g(x) \,=\, \begin{cases}
0 & \text{ if } x < (n-i)t, \\
\frac{i}{n} \, g(x-(n-i)t) & \text{ if } x \geq (n-i)t.
\end{cases}$$
(In other words, $g_i$ is a copy of $g$ that has been scaled down by a factor of $\nicefrac{i}{\e}$, and then shifted to the right by $(n-i)t$ units.)
We claim that $\seq{g_i}{0 \leq i \leq n}$ is the required $\e$-chain from $0$ to $g$. It is clear that $g_0 = 0$ and that $g_n = g$. For each $i \leq n$, we have
$$Tg_i \,=\, \textstyle \frac{i}{i+1} \, g_{i+1}.$$ 
Furthermore, $\norm{T_s^{-1}g} = \norm{g}$ for all $s \in \R_+$ (because $v(x)$ is constant), and it follows that 
$$\textstyle \norm{g_i} = \norm{\frac{i}{n}T^{-(n-i)}g} = \frac{i}{n}\norm{T^{-(n-i)}g} = \frac{i}{n}\norm{g}$$ 
for every $i$. 
Hence
$$\norm{Tg_i - g_{i+1}} \,=\, \norm{\textstyle \frac{i}{i+1} \, g_{i+1} - g_{i+1}} \,=\, \frac{1}{i+1}\norm{g_{i+1}} \,=\, \frac{1}{i+1}\frac{i+1}{n}\norm{g} \,=\, \frac{1}{n}\norm{g} \,<\, \e.$$
Thus $\seq{g_i}{0 \leq i \leq n}$ is an $\e$-chain from $0$ to $g$, as claimed. Next note that for any $f \in X$ and any $\e > 0$, there is an $\e$-chain from $f$ to $0$. The easiest way to see this is to observe that $\lim_{t \to \infty}T_tf = 0$ by Theorem~\ref{thm:main}, so there is some $n \in \N$ such that $\norm{T^n f} < \e$, in which case
$$\langle f,Tf,T^2f,\dots,T^{n-1}f,0 \rangle$$
is an $\e$-chain from $f$ to $0$. Finally, if $f,g \in X$ and $\e > 0$, then we may obtain an $\e$-chain from $f$ to $g$ by concatenating an $\e$-chain from $f$ to $0$ with an $\e$-chain from $0$ to $g$. Hence $T$ is chain transitive, as claimed.
\end{example}

\begin{example}
Suppose $v(x) = c^x$ for some $c > 1$. Then we claim that no $T \in \T \setmins \{T_0\}$ is chain transitive for $X = L^p_v(\R_+)$. (We leave it to the reader to show that a similar argument proves the same thing for $X = C_{0,v}(\R_+)$.) Fix $T = T_t$ with $t > 0$. To prove $T$ is not chain transitive, first observe that for any $h \in X$,
\begin{align*}
\norm{Th}^p &\,=\, \int_0^\infty |h(x+t)|^p c^x \,dx \\
&\,=\, c^{-t} \int_0^\infty |h(x+t)|^p c^{x+t} \,dx \\
&\,=\, c^{-t} \int_t^\infty |h(x)|^p c^x \,dx \\
&\,=\, c^{-t} \norm{h}^p.
\end{align*}
Thus, for any $f,g \in X$,
$$\norm{Tf-Tg} \,=\, \norm{T(f-g)} \,=\, c^{-\nicefrac{t}{p}}\norm{f-g}.$$
(This shows that $T$ is a contraction mapping.) Let $f \in X \setmins \{0\}$. We claim that for all sufficiently small $\e > 0$, there is no $\e$-chain from $0$ to $f$. Let $0 < \e < (\frac{1}{2}-\frac{1}{2}c^{-\nicefrac{t}{p}})\norm{f}$. (Note that $0 < c^{-\nicefrac{t}{p}} < 1$, because $t$ and $p$ are both positive.)
If $\norm{g} < \frac{1}{2}\norm{f}$, then 
$$\textstyle \norm{Tg} \,=\, c^{-\nicefrac{t}{p}}\norm{g} \,<\, \frac{1}{2}c^{-\nicefrac{t}{p}}\norm{f} \,<\, \frac{1}{2}\norm{f} - \e.$$
This implies that if $\seq{g_i}{0 \leq i \leq n}$ is an $\e$-chain, then for any $i < n$, 
$$\textstyle \norm{g_i} < \frac{1}{2}\norm{f} \ \ \text{ implies } \ \ \norm{g_{i+1}} < \frac{1}{2}\norm{f}.$$
Thus any $\e$-chain beginning in the open set $B(0,\frac{1}{2}\norm{f})$ must remain in that open set. In particular, there is no $\e$-chain from $0$ to $f$.
\end{example}


\begin{thebibliography}{99}
%%%%%%%%%%%%

\bibitem{AKM} 
R.~L. Adler, A.~G. Konheim, and M.~H. McAndrew, ``Topological entropy,'' \emph{Transactions of the American Mathematical Society} \textbf{114} (1965), pp. 309--319.

\bibitem{Barrachina&Peris}
X. Barrachina and A. Peris, ``Distributionally chaotic translation semigroups,'' \emph{Journal of Difference Equations and Applications} \textbf{18} (2012), iss. 4, pp. 751--761.

\bibitem{BMGMAP}
S. Bartoll, F. Mart{\'{\i}}nez-Gim{\'e}nez, M. Murillo-Arcila,
and A. Peris, \emph{Cantor sets, {B}ernoulli shifts and linear
dynamics}, Descriptive topology and functional analysis, Springer Proc. Math.
Stat., vol.~80, Springer, Cham, 2014, pp.~195--207.

\bibitem{BMGP}
S. Bartoll, F. Mart{\'{\i}}nez-Gim{\'e}nez, and A. Peris, ``The specification property for backward shifts,'' \emph{Journal of Difference Equations  and Applications} \textbf{18} (2012), no.~4, pp. 599--605.

\bibitem{BMGP2}
S. Bartoll, F. Mart{\'{\i}}nez-Gim{\'e}nez, and A. Peris, ``Operators with the specification property,'' \emph{Journal of Mathematical Analysis and Applications}
\textbf{436} (2016), no.~1, pp. 478--488.

\bibitem{Bayart&Matheron}
F. Bayart and {\'E}. Matheron, \emph{Dynamics of linear operators}, Cambridge Tracts in Mathematics, vol. 179, Cambridge University Press, Cambridge, 2009.

\bibitem{BBCP}
T. Berm\'udez, A. Bonilla, J. A. Conejero, A. Peris, ``Hypercyclic, topologically mixing and chaotic semigroups on Banach spaces,'' \emph{Studia Mathematica} \textbf{70} (2005), pp. 57--75.

\bibitem{Bowen}
R. Bowen, ``Topological entropy and axiom {${\rm A}$},'' \emph{Global {A}nalysis} ({P}roc. {S}ympos. {P}ure {M}ath., {V}ol. {XIV}, {B}erkeley, {C}alif., 1968), Amer. Math. Soc., Providence, R.I., 1970, pp.~23--41.

\bibitem{BKT} 
W. R. Brian, J. P. Kelly, and T. Tennant, ``The specification property and infinite entropy for certain classes of linear operators,'' \emph{Journal of Mathematical Analysis and Applications} \textbf{453} (2017), pp. 917--927. 

\bibitem{DSW}
W. Desch, W. Schappacher, and G. F. Webb, ``Hypercyclic and chaotic semigroups of linear operators,'' \emph{Ergodic Theory and Dynamical Systems} \textbf{17} (1997), no. 4, pp. 793--819.

\bibitem{Devaney}
R.~L. Devaney, \emph{An introduction to chaotic dynamical systems}, second ed., Addison-Wesley Studies in Nonlinearity, Addison-Wesley Publishing Company, Advanced Book Program, Redwood City, CA, 1989.

\bibitem{GEP}
K.-G. Grosse-Erdmann and A. Peris~Manguillot, \emph{Linear chaos}, Universitext, Springer, London, 2011.

\bibitem{Hoock}
A. M. Hoock, ``Topological and invariance entropy for infinite-dimensional linear systems,'' \emph{Journal of Dynamical and Control Systems} \textbf{20} (2014), iss. 1, pp. 19--31.

\bibitem{KLO}
D. Kwietniak, M. \L\c{a}cka, and P. Oprocha, ``A panorama of specification-like properties and their consequences,'' (\emph{Dynamics and Numbers}, Contemporary Mathematics \textbf{669}). American Mathematical Society, Providence, RI, 2016, pp.155?186.

\bibitem{Li&Yorke}
T. Y. Li and J. A. Yorke, ``Period three implies chaos,'' \emph{American Mathematical Monthly} \textbf{82} (1975), pp. 985--992.

\bibitem{Mangino&Peris}
E. M. Mangino and A. Peris, ``Frequently hypercyclic semigroups,'' \emph{Studia Mathematica} \textbf{202} (2011), no. 3, pp. 227--242.

\bibitem{MGOP}
F. Mart\'inez-Gim\'enez, P. Oprocha, and A. Peris, ``Distributional chaos for backward shifts,'' \emph{Journal of Mathematical Analysis and Applications} \textbf{351} (2009), iss. 2, pp. 607--615.

\bibitem{Schweizer&Smital} 
B. Schweizer and J. Sm\'ital, ``Measure of chaos and a spectral decomposition of dynamical systems on the interval,'' \emph{Transactions of the American Mathematical Society} \textbf{344} (1994), pp. 737--754.

\bibitem{Yin&Wei} Z. Yin and Y. Wei, ``Recurrence and topological entropy of translation operators,'' \emph{Journal of Mathematical Analysis and Applications} \textbf{460} (2018), pp. 203--215.

\end{thebibliography}
\end{document}